\def\ol#1{{\overline{#1}}}
\def\we{\wedge}
\def\wt{\widetilde}
\newcommand{\pt}{{\partial }}
\newcommand{\cA}{{\mathcal A} }
\newcommand{\cO}{{\mathcal O} }
\begin{document}

\title{Curvature of higher direct image sheaves}

\dedicatory{Dedicated to Professor Yujiro Kawamata\\on the occasion of his 60th birthday}

\author{Thomas Geiger}

\address{Fachbereich Mathematik und Informatik,
         Philipps-Universit\"at Marburg,
         Lahnberge,
         Hans-Meerwein-Stra\ss e,
         D-35032
         Marburg,
         Germany}

\email{geigert@mathematik.uni-marburg.de}

\author{Georg Schumacher}

\address{Fachbereich Mathematik und Informatik,
         Philipps-Universit\"at Marburg,
         Lahnberge,
         Hans-Meerwein-Stra\ss e,
         D-35032
         Marburg,
         Germany}

\email{schumac@mathematik.uni-marburg.de}

\newtheorem{theorem}{Theorem}
\newtheorem{definition}{Definition}
\newtheorem{lemma}{Lemma}
\newtheorem{remark}{Remark}
\newtheorem{proposition}{Proposition}
\newtheorem{corollary}{Corollary}
\newtheorem{claim}{Claim}
\newtheorem{observation}{Observation}

\keywords{Weil-Petersson metric;
          Families of Hermite-Einstein bundles;
          Stable bundles;
          Curvature of direct image sheaves;
          Moduli spaces}

\subjclass[2010]{32L10, 14D20}

\begin{abstract}
Given a family $(F,h) \to X \times S$ of Hermite-Einstein bundles on a compact Kähler manifold $(X,g)$ we consider the higher direct image sheaves $R^q p_* \mathcal{O}(F)$ on $S$, where $p: X \times S \to S$ is the projection. On the complement of an analytic subset these sheaves are locally free and carry a natural metric, induced by the $L_2$ inner product of harmonic forms on the fibers. We compute the curvature of this metric which has a simpler form for families with fixed determinant and families of endomorphism bundles. Furthermore, we discuss the metric for moduli spaces of stable vector bundles.
\end{abstract}

\maketitle

\section{Introduction}\label{sec:intro}
Given a compact Kähler manifold $(X,g)$ and a complex space $S$, a family of Hermite-Einstein bundles on $X$ parameterized by $S$ is a holomorphic vector bundle $F \to X \times S$ with a hermitian metric $h$ such that the restriction $h_s$ of $h$ to $F_s = F|X\times\{s\}$ is a Hermite-Einstein metric for every $s \in S$. If $p: X \times S \to S$ denotes the projection, the higher direct image sheaf $R^q p_* \mathcal{O}(F)$ for $q \in \mathbb{N}$ is locally free outside some analytic set of $S$ and carries a natural hermitian metric, fiberwise induced by the $L_2$ inner product of harmonic representatives.

If $\rho_k\in \cA^{0,1}(X,\mathrm{End}(F_s))$ denotes the harmonic representatives of  Kodaira-Spencer classes, then for all $q$ there exist natural mappings
\begin{eqnarray*}
  \rho_k \cup \textvisiblespace &:& \cA^{0,q}(X, F_s) \to \cA^{0,q+1}(X, F_s)\\
  \rho^*_k \cap \textvisiblespace &:& \cA^{0,q}(X, F_s) \to \cA^{0,q-1}(X, F_s),
\end{eqnarray*}
where the second mappings are adjoint to the first.
Denote by $\xi_\sigma$ harmonic sections of $\cA^{0,q}(X, F_s)$. Then we denote the pointwise inner product of such sections (on a fiber $X\times \{s\}$) that is induced by $h_s$ and $g$ by $(\xi_\rho,\xi_\sigma)$ and the above $L_2$ inner product by
$$
\langle\xi_\rho,\xi_\sigma\rangle= \int_{X \times \{s\}} (\xi_\rho,\xi_\sigma)\frac{\omega^n}{n!},
$$
where $n$ is the dimension of $X$ and $\omega$ the Kähler form of $g$.
We assume that $S$ is smooth and compute the curvature of the natural metric on  $R^q p_* \mathcal{O}(F)$. Under the above assumptions we have:

\begin{theorem}\label{thm:maintheorem}
Let $(F,h)$ be a holomorphic, hermitian vector bundle on $X \times S$ such that all vector bundles $F_s=F|X\times\{s\}$, $s\in S$ are simple, and all metrics $h_s$ are Hermite-Einstein. Then the curvature tensor of the induced metric on $R^qp_*\cO(F)$  is given by:
\begin{align*}
  R_{\rho\ol\sigma k \overline{l}}
  &=
  \langle
    G
    (
        \rho_l^*
      \cap
        \xi_\rho
    )
    ,
      \rho_k^*
    \cap
      \xi_\sigma
  \rangle  \\
  &\hphantom{=}+
  \langle
    G
    \left(
      \sqrt{-1}
      \Lambda_g
      \left[
          \rho_k,
          \rho_l^*
      \right]
    \right)
      \xi_\rho
    ,
      \xi_\sigma
  \rangle  \\
  &\hphantom{=}-
  \langle
    G
    (
        \rho_k
      \cup
        \xi_\rho
    ),
      \rho_l
    \cup
      \xi_\sigma
  \rangle  \\
  &\hphantom{=}+
  \langle
    H(\rho_{k\overline{l}})\,\xi_\rho,
      \xi_\sigma
  \rangle .
\end{align*}
For $q=0$ the first, and for $q=\dim X$ the third summand vanishes.
\end{theorem}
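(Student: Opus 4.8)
Since the base manifold $X$ and the Kähler metric $g$ are fixed, the family is purely a deformation of the bundle $F_{s_0}$ together with its hermitian metric, and the Kodaira-Spencer forms $\rho_k$ represent classes in $H^1(X,\mathrm{End}(F_{s_0}))$. The last sentence of the theorem is immediate from degree considerations: for $q=0$ we have $\cA^{0,q-1}(X,F_s)=0$, hence $\rho_l^*\cap\xi_\rho=0$, while for $q=\dim X$ we have $\cA^{0,q+1}(X,F_s)=0$, hence $\rho_k\cup\xi_\rho=0$. For the main identity, fix $s_0\in S$ in the locus where $\cF:=R^qp_*\cO(F)$ is locally free and shrink $S$ to a Stein polydisc with coordinates centred at $s_0$. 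Over such an $S$ the Leray spectral sequence gives $H^q(X\times S,\cO(F))\cong\cF(S)$, so a local frame of $\cF$ is represented by $\bar\partial_F$-closed $F$-valued $(0,q)$-forms $U_\rho$ on $X\times S$; the fiberwise harmonic projections $\xi_\rho(s)=\mathbb H_s\bigl(U_\rho|_{X\times\{s\}}\bigr)$ represent the frame and carry the metric $h_{\rho\ol\sigma}(s)=\langle\xi_\rho(s),\xi_\sigma(s)\rangle$. After a constant linear change we may assume $h_{\rho\ol\sigma}(s_0)=\delta_{\rho\sigma}$, and after a holomorphic change of the $U_\rho$ we may assume in addition $\pt_k h_{\rho\ol\sigma}(s_0)=0$. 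In this normal frame $R_{\rho\ol\sigma k\ol l}(s_0)=-\pt_k\pt_{\ol l}h_{\rho\ol\sigma}(s_0)$, so everything reduces to computing this second derivative.

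\textbf{First variation.} Differentiating the identities $\bar\partial_{F_s}\xi_\rho(s)=0$ and $\bar\partial^*_{F_s}\xi_\rho(s)=0$ along $\pt/\pt s^k$, and using that the first-order variation of $\bar\partial_{F_s}$ is cup product with $\rho_k$, one obtains a covariant derivative of the shape $D_k\xi_\rho=\mathbb H_s\bigl(\wt\nabla_k\xi_\rho\bigr)$, where $\wt\nabla_k$ is built from the Chern connection of $(F,h)$ and a horizontal lift of $\pt/\pt s^k$ and where $\bar\partial_{F_s}(\wt\nabla_k\xi_\rho)=-\rho_k\cup\xi_\rho$; since this last form is $\bar\partial_{F_s}$-exact (its class is the cup product of vanishing boundary data), solvability forces the correction term $-\bar\partial^*_{F_s}G(\rho_k\cup\xi_\rho)$, with $G$ the fiberwise Green operator (acting, in later occurrences, on whichever space of $F_s$- or $\mathrm{End}(F_s)$-valued forms is relevant), and the $(1,0)$-part of the connection form of $(F,h)$ along the fiber enters linearly. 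Dually, the anti-holomorphic variation of $\xi_\sigma$ is governed by the operator $\rho_l^*\cap(\,\cdot\,)$ adjoint to $\rho_l\cup(\,\cdot\,)$, together with $\bar\partial_{F_s}G$-corrections. Rewriting $h_{\rho\ol\sigma}=\langle\xi_\rho,U_\sigma\rangle$ via the fiberwise Hodge decomposition (the non-harmonic part of $U_\sigma$ is $\bar\partial_{F_s}$-exact, hence pairs to zero against the co-closed $\xi_\rho$), differentiating in $\ol s^l$, and imposing the normal-frame conditions kills the leftover first-order terms at $s_0$.

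\textbf{Second variation and the four terms.} Differentiating once more, restricting to $s_0$, and integrating by parts on the fiber with $\bar\partial_{F_s},\bar\partial^*_{F_s}$, the second derivative $\pt_k\pt_{\ol l}h_{\rho\ol\sigma}(s_0)$ breaks up according to how many factors of the operators $\rho_k\cup(\,\cdot\,)$ and $\rho_l^*\cap(\,\cdot\,)$ are brought down by the two differentiations. The contribution with one factor of $\rho_l^*\cap$ and one of $\rho_k^*\cap$ assembles, after inserting the fiberwise Hodge decomposition, into $\langle G(\rho_l^*\cap\xi_\rho),\rho_k^*\cap\xi_\sigma\rangle$, the first summand (vacuous for $q=0$). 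The mixed contribution, in which $\pt_k$ produces $\rho_k\cup$ and $\pt_{\ol l}$ produces $\rho_l^*\cap$ acting on the \emph{same} $F_s$-valued form, collapses --- using the Kähler identities together with the Bochner-Kodaira identity supplied by the Hermite-Einstein condition --- into $\langle G(\sqrt{-1}\,\Lambda_g[\rho_k,\rho_l^*])\,\xi_\rho,\xi_\sigma\rangle$, the second summand; simplicity of $F_s$ is used here so that $G$ is defined, since on $\mathrm{End}(F_s)$-valued functions the space of harmonic elements is $\mathbb C\cdot\mathrm{id}$ and the trace-free expression $\sqrt{-1}\,\Lambda_g[\rho_k,\rho_l^*]$ is orthogonal to it. The contribution with two factors of $\rho\cup$ gives, with opposite sign, $-\langle G(\rho_k\cup\xi_\rho),\rho_l\cup\xi_\sigma\rangle$, the third summand (vacuous for $q=\dim X$). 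Finally, the derivatives falling on the hermitian metric $h$ of $F$ rather than on the forms $\xi_\rho,\xi_\sigma$ contribute the $(k,\ol l)$-curvature component of $(F,h)$ along the base, which survives the harmonic projection as $\langle H(\rho_{k\ol l})\,\xi_\rho,\xi_\sigma\rangle$, the fourth summand.

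\textbf{Main obstacle.} The substance of the proof is the bookkeeping in the second variation: for each intermediate expression obtained by differentiating twice one must control its decomposition into fiberwise harmonic, $\bar\partial_{F_s}$-exact and $\bar\partial^*_{F_s}$-exact parts, integrate by parts on the fiber while the operators $\bar\partial^*_{F_s}$, the fiberwise Laplacian, $G$, $\mathbb H_s$ and the contraction $\Lambda_g$ all vary with $s$, and invoke the Hermite-Einstein condition at exactly the right places --- to commute $\Lambda_g$ past the fiberwise Laplacian and to identify $\bar\partial$-harmonic with $\partial$-harmonic forms --- so that the many a priori curvature-of-$F$ contributions coalesce into the single commutator term and the single $H(\rho_{k\ol l})$ term. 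Verifying that nothing else survives, and fixing the sign of each of the four summands, is where essentially all the work lies.
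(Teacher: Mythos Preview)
Your outline has the right shape, but as your own final paragraph concedes, the actual computation is left undone; several of the intermediate assertions are also off (the parenthetical ``its class is the cup product of vanishing boundary data'' is not a reason for $\ol\pt$-exactness, no ``horizontal lift'' is needed on a product $X\times S$, the second summand does not come from K\"ahler identities or Bochner--Kodaira, and simplicity of $F_s$ is not used for $G$ to be defined --- it enters only after the proof, to recognize $H(\rho_{k\ol l})$ as a scalar). Two concrete devices in the paper do the work that you leave as ``bookkeeping.''

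First, rather than projecting $\xi_\rho(s)=\mathbb H_s(U_\rho|_s)$ and then differentiating the projector, the paper shows (Lemma~\ref{lemma:correctionforsections}) that one may choose representatives $\xi_\rho\in\cA^{0,q}(X\times S,F)$ that are \emph{simultaneously} $\ol\pt$-closed on the total space and fiberwise harmonic; then $H_{\rho\ol\sigma}=\langle\xi_\rho|_s,\xi_\sigma|_s\rangle$ carries no projector and, in a normal frame, the curvature is literally $-\pt_{\ol l}\pt_k\langle\xi_\rho,\xi_\sigma\rangle(s_0)$. Second, the decisive object you are missing is the explicit primitive $F_{\ol l}(\mu)\in\cA^{0,q-1}$ of Lemma~\ref{lemma:covarianttoexterior}: for globally $\ol\pt$-closed $\mu$, the components of $\mu$ carrying one $d\ol s^l$ assemble into $F_{\ol l}(\mu)$ with $\ol\pt(F_{\ol l}(\mu)|_s)=(\nabla_{\ol l}\mu)|_s$. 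Together with the elementary facts $\ol\pt^*(\nabla_k\xi_\rho|_s)=0$ and $\ol\pt(\nabla_k\xi_\rho|_s)=\rho_k\cup\xi_\rho$ (Lemmas~\ref{lemma:dbarstarofnablamu}, \ref{lemma:dbarofnablamu}), the normal-frame condition gives $\nabla_k\xi_\rho|_{s_0}=\ol\pt^*G(\rho_k\cup\xi_\rho)$, whence $\langle\nabla_k\xi_\rho,\nabla_l\xi_\sigma\rangle(s_0)$ is the third summand. The remaining piece $\langle\nabla_{\ol l}\nabla_k\xi_\rho,\xi_\sigma\rangle$ is split by the curvature commutator $\nabla_k\nabla_{\ol l}-\nabla_{\ol l}\nabla_k=\rho_{k\ol l}$: two short identities for $F_{\ol l}$ (Lemmas~\ref{lemma:property2ofCTEconstruction}, \ref{lemma:property1ofCTEconstruction}) turn $\langle\nabla_k\nabla_{\ol l}\xi_\rho,\xi_\sigma\rangle$ into the first summand, and the direct calculation $\Box\rho_{k\ol l}=\sqrt{-1}\Lambda_g[\rho_k,\rho_{\ol l}]$ (Lemma~\ref{lemma:laplacerhokl}, using only the second Bianchi identity and the Hermite--Einstein condition to kill $\nabla_{\ol l}\nabla_k\Lambda_g\Omega^h$) converts $-\langle\rho_{k\ol l}\,\xi_\rho,\xi_\sigma\rangle$ into the second and fourth summands. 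None of this requires tracking how $\ol\pt^*_{F_s}$, $G$, $\mathbb H_s$ or $\Lambda_g$ vary with $s$.
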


The meaning of the function $H(\rho_{k \ol l})$ that depends only on the base parameter $s$ will be explained in Section~\ref{sec:curvature}.

The obvious key point about this formula is to express the curvature tensor in terms of intrinsically given data like harmonic Kodaira-Spencer classes. Technically this means the elimination of second order derivatives of the metric tensor when computing the curvature.

This result contains the case of direct image sheaves ($q=0$) which was solved in the work of To and Weng \cite{TW98}. Our main motivation is the study of the curvature of the Weil-Petersson metric on the moduli space of stable bundles in \cite{ST92} and also Berndtsson's positivity results for higher direct image sheaves \cite{Be09}, which was continued in the work of Berndtsson-P\u{a}un \cite{BP08} and Mourougane-Takayama \cite{MT08,MT09}.

At the end of section \ref{sec:curvature} we obtain the following results that are related to the Theorem~\ref{thm:maintheorem}. Namely, for families with fixed determinant we can rescale the Hermite-Einstein metrics locally and see that
$H(\rho_{k\overline{l}}|_{s_0})= 0$ so that the fourth summand in the curvature formula vanishes for such families. Furthermore, this fourth summand vanishes for induced families of endomorphism bundles without further assumptions.

This last case is especially interesting for applications to the moduli spaces of stable bundles. Let us consider the Kodaira-Spencer map
\[
  \rho_s: T_s S \longrightarrow H^1(X, \mathcal{O}(\mathrm{End}(F_s)))
\]
for $s \in S$, associated to a family $(F,h) \to X \times S$ of hermitian bundles on $X$. For any complex tangent vector  $v$ of $S$ at $s$  the Kodaira-Spencer map in terms of Dolbeault cohomology is given by
\begin{equation}\label{intro:overhaus}
  \rho_s(v|_s) =
  \left[ -( v \,\lrcorner\, \Omega^h)|_s \right]
\end{equation}
where $\Omega^h$ denotes the curvature form of $h$, and $|_s$ always denotes a restriction to $X\times \{s\}$. We use this notation unless obvious. Furthermore, the Hermite-Einstein condition implies that
$-( v \,\lrcorner\, \Omega^h)|_s
$
is {\em harmonic} \cite{Ov92,ST92}.  In particular, one can read off this formula that the harmonic Kodaira-Spencer tensors depend in a differentiable way on the parameter. Altogether, we have a close relationship between the variation of the holomorphic structure on a complex vector bundle and the metric structure that is induced by $h$ on $F$ over $X\times S$.

For complex tangent vectors $v$ and $w$ on $S$ at $s$ the induced natural inner product reads
\[
  \langle
    v, w
  \rangle _{\mathrm{WP}}(s)
  =
    \langle
      \rho_s(v) , \rho_s(w)
    \rangle(s)
  =
    \langle
      \left.
        (v \,\lrcorner\, \Omega^h)
      \right|_s,
      \left.
        (w \,\lrcorner\, \Omega^h)
      \right|_s
    \rangle
\]
for $s \in S$. For effective families we get a hermitian metric on $S$ which is known to be Kähler. Moreover, this construction is functorial, i.e.\ compatible with base change and, as a consequence, descends to the corresponding moduli space of stable bundles. Due to its analogy with the Weil-Petersson metric on the Teichmüller space of Riemann surfaces, it is also called {\em Weil-Petersson metric}. In \cite{ST92} the curvature tensor of this metric was computed. The result follows as a special case of our main theorem. Under our assumptions the fibers of $R^1 p_* \mathcal{O}(\mathrm{End}(F))$ are $H^1(X, \mathcal{O}(\mathrm{End}(F_s)))$. So it is sufficient to apply the Theorem to the induced family of endomorphism bundles in the case $q=1$ and use the simplification for families of endomorphism bundles mentioned above.

{\bf Acknowledgement.} {\em This article is dedicated to Professor Yujiro Kawamata on the occasion of his birthday with respect and admiration.}
\section{The natural $L_2$ metric}\label{sec:l2metric}
By a theorem of Grauert, for any family of holomorphic vector bundles $(F,h) \to X \times S$ parameterized by a reduced space $S$, the dimension $h^q(X, \mathcal O(F_s))$ is constant on the complement of a certain analytic subset of $S$. Furthermore, the sheaf $R^q p_* \mathcal{O}(F)$ is locally free over this complement and the natural morphism
\begin{equation}\label{l2metric:fiberofimagesheave}
  (R^q p_* \mathcal{O}(F))_s
  \otimes_{\mathcal{O}_{S,s}}
  \mathbb{C}(s)
  \stackrel{\sim}{\longrightarrow}
  H^q(X, \mathcal{O}(F_s))
\end{equation}
is an isomorphism by the base-change theorem (cf.\ \cite{BS76}). Given the Kähler structure on $X$ and the hermitian metric $h$ on $F$ over $X\times S$ we identify the cohomology groups with the spaces $\mathcal{H}^{0,q}(X, F_s)$ of forms that are harmonic with respect to $g$ and $h_s$. On these we have a natural inner product, given by
\[
  \langle \mu, \eta \rangle
  =
  \int_X
    (\mu, \eta)
    \,\frac{\omega^n}{n!}.
\]
Since our arguments will be local with respect to the parameter space $S$, we may assume that $S$ is Stein and identify sections of direct image sheaves with cohomology classes in $H^q(X\times S, \mathcal{O}(F))$. On one hand, these are represented by {\em $\ol{\partial}$-closed forms} over $X\times S$, on the other hand, by {\em families of harmonic forms} in $\mathcal H^{0,q}(X,F_s)$. The following lemma (cf.\ \cite[Lemma 2]{Sch12}) shows that both properties can be achieved simultaneously, a fact that is necessary for later computations.

\begin{lemma}\label{lemma:correctionforsections}
Let $(F,h) \to X \times S$ be a family of hermitian bundles on a compact Kähler manifold $(X,g)$ parameterized by a complex manifold $S$ and assume that $R^q p_*\mathcal O(F)$ is locally free. If
$
  \phi\in\mathcal{A}^{0,q}(X\times S, F)
$
is $\ol{\partial}$-closed and $s_0 \in S$ any point, then there exists a form $\chi\in \mathcal{A}^{0,q-1}(X\times V,F) $ on some open neighborhood $V \subset S$ of $s_0$ such that
\[
    (\phi + \ol{\partial} \chi)|_s   =   H(\phi|_s)
\]
for every $s \in V$, where $H$ is the harmonic projection for the fiber $F_s$. In particular, any class in $H^q(X\times S, \mathcal O(F))$ can be represented by a $\ol{\partial}$-closed form, whose restrictions to all fibers of $s\in V$ are harmonic.
\end{lemma}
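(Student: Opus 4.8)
The plan is to correct $\phi$ fiberwise using the Hodge decomposition and then to verify that the correction can be carried out holomorphically in the parameter. Concretely, on each fiber $X\times\{s\}$ we have the orthogonal decomposition
\[
  \phi|_s = H(\phi|_s) + \ol{\partial}_s\, G_s \ol{\partial}_s^{\,*}\phi|_s + \ol{\partial}_s^{\,*} G_s \ol{\partial}_s \phi|_s,
\]
where $G_s$ is the Green's operator and $\ol{\partial}_s$, $\ol{\partial}_s^{\,*}$ are the fiberwise operators. Since $\phi$ is $\ol{\partial}$-closed on $X\times S$, its restriction $\phi|_s$ is $\ol{\partial}_s$-closed for every $s$, so the last term vanishes and
\[
  \phi|_s - H(\phi|_s) = \ol{\partial}_s\bigl(G_s \ol{\partial}_s^{\,*}\phi|_s\bigr).
\]
Thus the natural candidate for the correction is the $(0,q-1)$-form $\psi$ on $X\times S$ whose restriction to each fiber is $\psi|_s := G_s\ol{\partial}_s^{\,*}\phi|_s$. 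Because $R^qp_*\cO(F)$ is locally free and the fiber dimensions $h^q(X,\cO(F_s))$ are constant, the operators $H$ and $G_s$ depend differentiably (indeed real-analytically) on $s$, so $\psi$ is a smooth form on $X\times S$. This already gives $(\phi - \ol{\partial}\psi)|_s = H(\phi|_s)$ up to the terms in $\ol{\partial}\psi$ that differentiate in the $S$-directions; the point is that those extra terms have no purely-fiber component, but they are genuinely present, so $\phi - \ol{\partial}\psi$ is not yet $\ol{\partial}$-closed and $\psi$ is not the final answer.

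The second and essential step is to promote $\psi$ to a correction $\chi$ that is simultaneously holomorphic in $s$. Here one uses the hypothesis that $S$ may be taken Stein (shrink to a neighborhood $V\ni s_0$). Write $\eta := \phi - \ol{\partial}\psi$; then $\eta$ is still $\ol{\partial}$-closed, its fiber restrictions are harmonic, and it represents the same class. The failure of $\psi$ itself is that $\ol{\partial}\psi$ contains a component in $d\bar s^j$-directions; equivalently, $\eta$ may have nonzero "mixed" components. One decomposes $\ol{\partial}$ on $X\times S$ according to the splitting of $(0,\bullet)$-forms into fiber and base parts, $\ol{\partial} = \ol{\partial}_X + \ol{\partial}_S$, and solves a $\ol{\partial}_S$-problem on the Stein base $V$ with values in the (fiberwise harmonic, hence finite-rank) bundle to absorb the remaining mixed terms. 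Since $V$ is Stein and the relevant cohomology sheaf is coherent and locally free, $H^{>0}(V,\text{—})$ vanishes and the needed primitive exists; adding it to $\psi$ produces $\chi$ with the asserted two properties. The final sentence of the statement is then immediate: the class of $\phi$ in $H^q(X\times S,\cO(F))$ is represented by $\phi+\ol{\partial}\chi$, which is $\ol{\partial}$-closed and fiberwise harmonic.

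I expect the main obstacle to be exactly this second step — the precise bookkeeping of how $\ol{\partial}$ splits relative to the fiber/base decomposition and why the "error" produced by the naive fiberwise Green's-operator correction lies in the range of a solvable $\ol{\partial}_S$-operator over the Stein base. One must be careful that the finite-rank bundle of fiberwise harmonic forms carries a holomorphic structure compatible with $R^qp_*\cO(F)$ (this is where local freeness and the base-change isomorphism \eqref{l2metric:fiberofimagesheave} enter), so that "holomorphic in $s$" is meaningful and the Stein vanishing applies. The differentiable dependence of $G_s$ and $H$ on $s$, while technically necessary to even write down $\psi$, is standard elliptic theory and should not be the crux; the crux is organizing the two corrections so that the fiberwise-harmonic property survives the second, base-direction correction. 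Since the paper cites \cite[Lemma 2]{Sch12} for this statement, I would follow that reference's structure, adapting the scalar/line-bundle argument there to the vector-bundle-valued setting, which changes nothing essential in the homological part of the argument.
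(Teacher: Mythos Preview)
Your first step is correct and is, in fact, the paper's entire proof: with $\chi := -\psi$ and $\psi|_s = G_s\ol\partial_s^{\,*}(\phi|_s)$ one has $(\phi + \ol\partial\chi)|_s = \phi|_s - \ol\partial_s(\psi|_s) = H(\phi|_s)$, because restriction to $X\times\{s\}$ kills every term carrying a $d\bar s^j$, so $(\ol\partial\chi)|_s = \ol\partial_s(\chi|_s)$ on the nose. The only analytic input is the smooth dependence of $G_s$ (equivalently of $H$) on $s$, which you correctly identify and which the paper attributes to Kodaira--Spencer.

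Your second step is superfluous and rests on two misreadings. First, you write that $\phi - \ol\partial\psi$ ``is not yet $\ol\partial$-closed''; but $\phi$ is $\ol\partial$-closed by hypothesis and $\ol\partial(\ol\partial\psi)=0$, so $\phi - \ol\partial\psi$ is $\ol\partial$-closed automatically---as you yourself concede two sentences later. Second, the lemma never asks that $\chi$ be holomorphic in $s$; it asks only for a smooth form $\chi\in\mathcal A^{0,q-1}(X\times V,F)$. The mixed $d\bar s^j$-components that remain in $\phi + \ol\partial\chi$ are harmless: they vanish upon fiber restriction and they do not obstruct $\ol\partial$-closedness. Hence no Stein $\ol\partial_S$-problem needs to be solved, and the difficulty you flag as ``the crux'' does not exist. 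The paper's argument stops exactly where your first paragraph ends.
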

\begin{proof}
First, there exist relative $(0,q)$ forms
$
  \psi_G^\prime
$
and
$
  \psi_H^\prime
$
along the fibers $F_s \to X$ with $\psi_G^\prime|_s =  G(\phi|_s)$ as well as
$\psi_H^\prime|_s = H(\phi|_s)$ for every $s \in S$, where $G$ denotes the Green operator on $F_s$-valued forms. In fact, this is a consequence of our assumption and follows  from the fundamental theorem in \cite{KS58}, proved  in \cite[Theorem 7]{KS60}. On some open neighborhood $V$ of $s_0$ the relative form
$
  \psi_H^\prime
$
as well as
$
  \ol{\partial}^*_{\mathrm{rel}}
  \psi_G^\prime
$,
where
$
  \ol{\partial}^*_{\mathrm{rel}}
$
is the relative $\ol{\partial}^*$ operator, are induced by forms
$
  \psi_H
  \in
  \mathcal{A}^{0,q}(X \times V, F)
$
and
$
  \psi_G
  \in
  \mathcal{A}^{0,q-1}(X \times V, F)
$,
respectively. Initially the extensions of the relative forms can be taken locally, and then glued together by a partition of unity.  We get $\psi_G|_s =  \ol{\partial}^* G(\phi|_s)$ and also $\psi_H|_s = H(\phi|_s)$ for every $s \in V$. Using $\ol{\partial} \phi = 0$ we obtain
\[
    \phi|_s
  =
  H(\phi|_s)+\ol{\partial}\ol{\partial}^*G(\phi|_s)
\]
and, as a consequence, by choosing $\chi := - \psi_G$ we finally have
\[
    (\phi + \ol{\partial}\chi)|_s
  =
    \phi|_s  -
  \ol{\partial}(\psi_G|_s) =  H(\phi|_s).
\]
\end{proof}

\section{Computation of the curvature}\label{sec:curvature}
Again we will always assume that all $R^qp_*\mathcal O(F)$ are locally free. Although some statements are possible for reduced base spaces, we will also assume that $S$ is smooth. Since  curvature computations are local with respect to the base, we can assume without loss of generality that $S$ is Stein with local coordinates $(s^1,\ldots,s^m)$ and replace the space $S$ by a neighborhood of a given point $s_0\in S$, if necessary.

We will use holomorphic coordinates $(z^1,\ldots,z^n)$ for the Kähler manifold $X$ and denote the Kähler form by
$$
\omega=\frac{\sqrt{-1}}{2}g_{\alpha\ol \beta} dz^\alpha\we dz^{\ol{\beta}}.
$$
Greek indices refer to the $z$-coordinates on $X$ (or to sections of the given vector bundle), whereas Latin indices are reserved for $s$-coordinates.

Let
$
  \Xi_1, \ldots, \Xi_R
  \in R^q p_* \mathcal{O}(F)(S)
$
define a holomorphic frame so that the inner product is given by $H_{\tau\ol\sigma}$. Then the curvature form $\Omega \in \mathcal A^{1,1}(X \times S,{\rm End}(R^qp_*\mathcal O(F)))$ is equal to
$$
\Omega = R^{\;\tau}_{\rho\; k \ol l} ds^k\wedge ds^{\ol l},
$$
in $s$-coordinates, and we have
$$
R_{\rho\ol\sigma k \ol l} = H_{\tau\ol\sigma}R^{\;\tau}_{\rho\; k \ol l}.
$$
Furthermore, we will work in normal coordinates at a given point $s_0$. After replacing $S$ by a neighborhood of $s_0$, if necessary, we can assume that
$$
H_{\tau \ol \sigma}(s_0)= \delta^\sigma_\tau \text{ and } \frac{\pt}{\pt s^k}H_{\rho\ol \sigma}(s_0)=0 \text{ for all } k,\sigma, \rho.
$$
We apply Lemma~\ref{lemma:correctionforsections} which provides representatives
$$
  \xi_1, \ldots, \xi_R
  \in
  \mathcal{A}^{0,q}(X \times S,F)
$$
of the Dolbeault cohomology classes of $\Xi_\rho$, such that the restrictions $\xi_\rho|_s$ are {\em harmonic}. We note
\begin{equation}\label{curvature:hrhosigma}
  H_{\rho\overline{\sigma}}(s)
  =
  \langle
    \Xi_\rho, \Xi_\sigma
  \rangle (s)
  =
  \langle
    \xi_\rho, \xi_\sigma
  \rangle (s).
\end{equation}
With respect to the given normal coordinates, we get
\begin{equation}\label{curvature:curvature}
R_{\rho\ol \sigma k \ol l}(s_0)= - \pt_{\ol l}\pt_k H_{\rho \ol \sigma}(s_0)   = - \pt_{\ol l}\pt_k \langle \xi_\rho, \xi_\sigma\rangle(s_0)
\end{equation}
for the curvature on the direct image sheaves.

For computational reasons we equip $S$ with the flat metric with respect to the coordinates $s$ and $X \times S$ with the product metric. This convention is not essential, but we are in a position to use covariant derivatives for tensors with values in $F$ over the total space $X\times S$.

We will use the following notation for differentiable sections $\chi$ of $F$ over $X\times S$.
The connection form of $h$ is
$$
\theta^h = \pt h \cdot h^{-1}
$$
so that
$$
\nabla_\alpha \chi = \pt_\alpha \chi + \theta^h_\alpha\circ \chi, \text{ and } \nabla_k \chi = \pt_k \chi + \theta^h_k\circ \chi \text{ resp.}
$$
(and $\nabla_\ol\beta=\pt_\ol \beta$, $\nabla_\ol l = \pt_\ol l$).
Now
\begin{equation}\label{eq:curvh}
\nabla_\ol{\beta}\nabla_\alpha \chi = \nabla_\alpha\nabla_\ol{\beta}\chi  - R^h_{\alpha \ol{\beta}} \circ \chi
\end{equation}
where $R^h_{\alpha \ol{\beta}}= -\nabla_{\ol{\beta}}\theta^h_\alpha$ etc.\ denote the components of the curvature tensor of $h$ with values in ${\rm End}(F)$ over $X \times S$. For the components in base direction the analogous equations hold.

For later use we note that for the metric induced by $h$ on ${\rm End}(F)$ and differentiable sections $\zeta$ the above formula reads
\begin{equation}\label{eq:curvhend}
\nabla_\ol{\beta}\nabla_\alpha \zeta = \nabla_\alpha\nabla_\ol{\beta}\zeta  - [R^h_{\alpha \ol{\beta}} , \zeta].
\end{equation}

In this section $\eta$ and $\mu$ will denote $F$-valued $(0,q)$-forms. The following construction will be essential:

\begin{lemma}\label{lemma:covarianttoexterior}
Let $\mu\in \cA^{0,q}(X\times S,F)$, $q >  0$ with $\ol{\partial} \mu = 0$. Then for every $1 \leq l \leq m$ there exists a form
$
  F_{\overline{l}}(\mu)
  \in
  \mathcal{A}^{0,q-1}(X \times S,F)
$
satisfying the following equation for every $s \in S$:
\[
  \ol{\partial}(F_{\overline{l}}(\mu)|_s)
  =(\nabla_{\overline{l}}\mu)|_s.
\]
In other words, the derivative in a (conjugate) parameter direction of a $\ol\pt$-closed form, restricted to a fiber is $\ol\pt$-exact.
\end{lemma}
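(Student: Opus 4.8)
The plan is to use only the product structure of $X\times S$ together with the fact that the Chern connection $\theta^h=\pt h\cdot h^{-1}$ of $h$ is of type $(1,0)$ in both sets of variables. I would decompose the Dolbeault operator of $F$ on the total space as $\ol\pt=\ol\pt_X+\ol\pt_S$, where $\ol\pt_X$ is the relative operator, differentiating along the fibres (its restriction to a fibre $X\times\{s\}$ is the Dolbeault operator of $F_s$), and $\ol\pt_S$ differentiates along the base; both summands are globally defined. Correspondingly, decompose $\mu=\sum_{b\ge0}\mu_b$, where $\mu_b$ collects the terms containing exactly $b$ of the differentials $ds^{\ol 1},\dots,ds^{\ol m}$. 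Then $\mu_0$ is a relative $(0,q)$-form with $\mu|_s=\mu_0|_s$, whereas every $\mu_b$ with $b\ge1$ carries a factor $ds^{\ol j}$ and therefore restricts to $0$ on each fibre; the same holds for $\nabla_{\ol l}\mu_b$ with $b\ge1$, so that $(\nabla_{\ol l}\mu)|_s=(\nabla_{\ol l}\mu_0)|_s$.

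Next I would read off from $\ol\pt\mu=0$ the part containing $q$ anti-holomorphic fibre differentials and one anti-holomorphic base differential, which is
\[
  \ol\pt_X\mu_1+\ol\pt_S\mu_0=0 .
\]
Writing $\mu_1=\sum_{l=1}^m ds^{\ol l}\we\beta_l$ with relative $(0,q-1)$-forms $\beta_l\in\cA^{0,q-1}(X\times S,F)$, one has $\ol\pt_X\mu_1=-\sum_l ds^{\ol l}\we\ol\pt_X\beta_l$, while — because $\theta^h$ has type $(1,0)$, so that $\ol\pt_S$ acts on a relative form by $\pt_{\ol l}=\nabla_{\ol l}$ on coefficients — one has $\ol\pt_S\mu_0=\sum_l ds^{\ol l}\we\nabla_{\ol l}\mu_0$. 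Comparing the coefficients of the linearly independent forms $ds^{\ol l}$ gives, for each $l$,
\[
  \ol\pt_X\beta_l=\nabla_{\ol l}\mu_0 .
\]

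Finally I would put $F_{\ol l}(\mu):=\tfrac{\pt}{\pt\ol s^l}\,\lrcorner\,\mu\in\cA^{0,q-1}(X\times S,F)$; this is globally defined, and its restriction to $X\times\{s\}$ equals $\beta_l|_s$ (the contraction annihilates $\mu_0$, it produces $\beta_l$ out of $\mu_1$, and the contributions of the $\mu_b$ with $b\ge2$ still carry a factor $ds^{\ol j}$ and vanish on the fibre). Since $\ol\pt_X$ commutes with restriction to fibres and there coincides with the fibrewise $\ol\pt$, restricting $\ol\pt_X\beta_l=\nabla_{\ol l}\mu_0$ to $X\times\{s\}$ yields
\[
  \ol\pt\bigl(F_{\ol l}(\mu)|_s\bigr)=\ol\pt_X\bigl(\beta_l|_s\bigr)=(\ol\pt_X\beta_l)|_s=(\nabla_{\ol l}\mu_0)|_s=(\nabla_{\ol l}\mu)|_s ,
\]
which is the claim.

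I do not expect a genuine obstacle here; the argument is formal bookkeeping with bidegrees and signs. The single point needing care is that the splitting $\ol\pt=\ol\pt_X+\ol\pt_S$ behaves correctly with respect to the holomorphic structure of $F$ and to restriction to fibres — concretely, that in a local holomorphic frame of $F$ the operator $\ol\pt_F$ is the ordinary Dolbeault operator applied componentwise and $\nabla_{\ol l}$ is plain $\pt_{\ol l}$-differentiation of the coefficients; this is exactly where it is used that $F$ is holomorphic over all of $X\times S$. Apart from that, the only inputs are $\ol\pt\mu=0$ — and only through its component of fibre-degree $q$ and base-degree $1$ — and the hypothesis $q>0$, needed for $\cA^{0,q-1}$ to be defined.
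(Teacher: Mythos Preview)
Your proof is correct and follows essentially the same approach as the paper: both extract the component of $\ol\pt\mu=0$ of fibre anti-holomorphic degree $q$ and base anti-holomorphic degree $1$, and read off the primitive from there. The paper carries this out in local coordinates and defines $F_{\ol l}(\mu)$ by the explicit formula $\frac{(-1)^{q+1}}{(q-1)!}\sum_\beta\mu_{\ol\beta_1,\ldots,\ol\beta_{q-1},\ol l}\,dz^{\ol\beta_1}\wedge\ldots\wedge dz^{\ol\beta_{q-1}}$, which is exactly your $\beta_l$; your contraction $\partial_{\ol s^l}\,\lrcorner\,\mu$ agrees with this on each fibre (the extra contributions from $\mu_b$, $b\ge2$, still carry a $ds^{\ol j}$ and vanish upon restriction).
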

\begin{proof}
We write
\begin{align*}
  \mu
  &=
  \frac{1}{q!}
  \sum_{\beta} \mu_{\overline{\beta}_1,\ldots,\overline{\beta}_q} dz^{\overline{\beta}_1} \wedge \ldots \wedge dz^{\overline{\beta}_q} \\\nonumber
  &\hphantom{=}+
  \frac{1}{(q-1)!}
  \sum_{\beta,k} \mu_{\overline{\beta}_1,\ldots,\overline{\beta}_{q-1},\overline{k}} dz^{\overline{\beta}_1} \wedge \ldots \wedge dz^{\overline{\beta}_{q-1}} \wedge ds^{\overline{k}} \\\nonumber
  &\hphantom{=}+
  \mbox{summands with more than one } \overline{d s} \mbox{-factor}.
\end{align*}
Because $\ol{\partial} \mu = 0$, for the fixed $l$ the factors
$
  dz^{\overline\beta_1} \wedge \ldots \wedge dz^{\overline\beta_q} \wedge ds^{\overline{\ell}}
$
yield:
\begin{align}\label{lemma:covarianttoexterior:p1}
  &\frac{1}{q!}
  \sum_\beta
    \partial_{\overline{l}}
    \mu_{\overline{\beta}_1,\ldots,\overline{\beta}_q}
    dz^{\overline{\beta}_1} \wedge \ldots \wedge dz^{\overline{\beta}_q}\\\nonumber
  &\hspace{1cm}=
  \frac{(-1)^{q+1}}{q!}
  \sum_\beta
  \sum_\nu
    (-1)^{\nu+1}
    \partial_{\overline{\beta_\nu}}
    \mu_{\overline{\beta}_1,\ldots,\widehat{\overline{\beta}_\nu},\ldots,\overline{\beta}_q,\overline{l}}
    dz^{\overline{\beta}_1} \wedge \ldots \wedge dz^{\overline{\beta}_q}.
\end{align}
After restricting to $X\times \{s\}$ the left hand side of \eqref{lemma:covarianttoexterior:p1} equals
$
  \left.
    \left(
      \nabla_{\overline{l}} \mu
    \right)
  \right|_s
$.
So
\begin{equation}\label{lemma:covarianttoexterior:p2}
  F_{\ol l}(\mu)
  :=
  \frac{(-1)^{q+1}}{(q-1)!}
  \sum_\beta
    \mu_{\overline{\beta}_1,\ldots,\overline{\beta}_{q-1},\overline{l}}
    dz^{\overline{\beta}_1} \wedge \ldots \wedge dz^{\overline{\beta}_{q-1}}
\end{equation}
has the desired properties.
\end{proof}

We will use the notion $\ol\pt^*$ only for differential forms {\em on the fibers} $X\times\{s\}$ with respect to $g$ and $h_s$ (with variable parameter). In this sense we have a first application:

\begin{corollary}\label{corollary:vanishingofproduct}
If $\ol{\partial}\eta = 0$ and
$
  \ol{\partial}^*(\mu|_s)  =  0
$
for some $s \in S$, then also
$
  \langle
    \mu, \nabla_{\overline{k}} \eta
  \rangle (s)
  =
  0
$.
\end{corollary}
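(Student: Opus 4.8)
The plan is to deduce the vanishing directly from Lemma~\ref{lemma:covarianttoexterior} together with the fact that $\ol\pt^*$ is, on each fiber, the formal adjoint of $\ol\pt$. Since $\nabla_{\ol k}=\pt_{\ol k}$, the form $\nabla_{\ol k}\eta$ arises from $\eta$ by differentiating its coefficients in the antiholomorphic base direction; its restriction to a fiber $X\times\{s\}$ is an $F_s$-valued $(0,q)$-form, so that $\langle\mu,\nabla_{\ol k}\eta\rangle(s)=\int_{X\times\{s\}}(\mu|_s,(\nabla_{\ol k}\eta)|_s)\,\omega^n/n!$ is well-defined and is the pairing of two $(0,q)$-forms on the fiber.

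Assuming $q\ge 1$, I would apply Lemma~\ref{lemma:covarianttoexterior} to the $\ol\pt$-closed form $\eta$ and the index $k$, obtaining a form $F_{\ol k}(\eta)\in\cA^{0,q-1}(X\times S,F)$ with $\ol\pt\bigl(F_{\ol k}(\eta)|_s\bigr)=(\nabla_{\ol k}\eta)|_s$ for every $s$. Substituting this into the pairing at the given $s$ and then integrating by parts on the compact Kähler fiber $(X,g)$ with bundle metric $h_s$ yields
\[
  \langle\mu,\nabla_{\ol k}\eta\rangle(s)
  =\bigl\langle\mu|_s,\ \ol\pt\bigl(F_{\ol k}(\eta)|_s\bigr)\bigr\rangle
  =\bigl\langle\ol\pt^*(\mu|_s),\ F_{\ol k}(\eta)|_s\bigr\rangle=0,
\]
where the last equality uses the hypothesis $\ol\pt^*(\mu|_s)=0$. (For $q=0$ there is nothing to prove, since then $\ol\pt\eta=0$ implies in particular $\pt_{\ol k}\eta=0$, hence $\nabla_{\ol k}\eta=0$.)

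I do not anticipate a genuine obstacle, because the substantive step --- that the fiberwise restriction of the parameter derivative of a $\ol\pt$-closed form is $\ol\pt$-exact on that fiber --- is precisely the content of Lemma~\ref{lemma:covarianttoexterior}, which has already been established. What remains is only bookkeeping: checking that $\nabla_{\ol k}$ agrees with $\pt_{\ol k}$ (so the hypothesis $\ol\pt\eta=0$ is exactly what Lemma~\ref{lemma:covarianttoexterior} needs), and noting that the $\ol\pt^*$ in the statement is the fiberwise operator with respect to $g$ and $h_s$, which is what makes the passage of $\ol\pt$ across the $L_2$ pairing on $X\times\{s\}$ legitimate.
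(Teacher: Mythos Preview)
Your argument is correct and matches the paper's proof essentially line for line: split off the trivial case $q=0$ (where $\eta$ is holomorphic so $\nabla_{\ol k}\eta=0$), and for $q>0$ invoke Lemma~\ref{lemma:covarianttoexterior} to write $(\nabla_{\ol k}\eta)|_s=\ol\pt(F_{\ol k}(\eta)|_s)$, then move $\ol\pt$ across the fiberwise $L_2$ pairing to hit $\ol\pt^*(\mu|_s)=0$. There is no substantive difference between your proposal and the paper's own proof.
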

\begin{proof}
In the case $q=0$ the result is clear because $\eta$ is holomorphic. In the case $q >0$ we can apply Lemma \ref{lemma:covarianttoexterior} to get on all fibers $X\times \{s\}$:
\[\langle \mu,\nabla_{\overline{k}} \eta \rangle(s)   =  \langle  \mu,\ol{\partial}(F_{\overline{k}}\eta)\rangle(s)
  =\langle \ol{\partial}^*\! \mu, F_{\overline{k}}(\eta)\rangle (s)  = 0.
\]
\end{proof}

We also obtain a formula for the second order derivatives:
\begin{corollary}\label{corollary:secondderivatives}
If we have $\ol{\partial}\eta = 0$ and
$
  \ol{\partial}^*(\mu|_s) = 0
$
on all fibers we get:
\[
  \partial_{\overline{l}} \partial_k \langle \mu, \eta\rangle   = \langle \nabla_k \mu, \nabla_l \eta\rangle    + \langle \nabla_{\overline{l}}\nabla_k \mu, \eta \rangle  .
\]
\end{corollary}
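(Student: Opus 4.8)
The plan is to compute the mixed second derivative by differentiating $\langle\mu,\eta\rangle$ first in the direction $s^k$ and then in the direction $\overline{s^l}$, discarding along the way the cross term supplied by Corollary~\ref{corollary:vanishingofproduct}.

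First I would record that differentiation under the integral sign is legitimate throughout, as $X$ is compact and all the data are $C^\infty$, and that the volume form $\omega^n/n!$ on $X$ does not depend on $s$; hence only the coefficients of the $F$-valued forms and the fibre metrics $h_s$ are affected by parameter derivatives. Since $\nabla$ is the Chern connection of $h$ on $X\times S$, it is compatible with the pointwise inner product $(\,\cdot\,,\,\cdot\,)$; decomposing $d(\chi_1,\chi_2)=(\nabla\chi_1,\chi_2)+(\chi_1,\nabla\chi_2)$ into types and restricting to the base directions gives, for any $F$-valued forms $\chi_1,\chi_2$,
\[
  \partial_k(\chi_1,\chi_2)=(\nabla_k\chi_1,\chi_2)+(\chi_1,\nabla_{\overline k}\chi_2),
  \qquad
  \partial_{\overline l}(\chi_1,\chi_2)=(\nabla_{\overline l}\chi_1,\chi_2)+(\chi_1,\nabla_l\chi_2),
\]
where on $F$-coefficients $\nabla_{\overline l}$ coincides with $\partial_{\overline l}$. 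Integrating the first identity over the fibre with $\chi_1=\mu$, $\chi_2=\eta$ yields
\[
  \partial_k\langle\mu,\eta\rangle=\langle\nabla_k\mu,\eta\rangle+\langle\mu,\nabla_{\overline k}\eta\rangle .
\]
Because $\ol\partial\eta=0$ and $\ol\partial^*(\mu|_s)=0$ on \emph{every} fibre, Corollary~\ref{corollary:vanishingofproduct} applies at each $s\in S$ and shows that $\langle\mu,\nabla_{\overline k}\eta\rangle$ vanishes identically as a function on the base; thus $\partial_k\langle\mu,\eta\rangle=\langle\nabla_k\mu,\eta\rangle$ everywhere.

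It then remains to apply the second metric-compatibility identity above to the function $\langle\nabla_k\mu,\eta\rangle$, with $\chi_1=\nabla_k\mu$ and $\chi_2=\eta$, differentiating in the direction $\overline{s^l}$:
\[
  \partial_{\overline l}\langle\nabla_k\mu,\eta\rangle=\langle\nabla_{\overline l}\nabla_k\mu,\eta\rangle+\langle\nabla_k\mu,\nabla_l\eta\rangle ,
\]
which is precisely the asserted formula; no further use of Corollary~\ref{corollary:vanishingofproduct} is needed. I do not expect a genuine obstacle: the argument is bookkeeping with the types of the Chern connection and with the conjugate-linearity of $(\,\cdot\,,\,\cdot\,)$ in its second slot. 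The one point deserving attention is that the vanishing of $\langle\mu,\nabla_{\overline k}\eta\rangle$ must be invoked as an identity between functions on $S$ \emph{before} the second differentiation, which is exactly why the hypothesis $\ol\partial^*(\mu|_s)=0$ is assumed on all fibres rather than at a single point.
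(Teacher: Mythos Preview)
Your argument is correct and is exactly the computation the paper has in mind: the corollary is stated in the paper without proof, as an immediate consequence of metric compatibility of the Chern connection in the base directions together with Corollary~\ref{corollary:vanishingofproduct} applied on every fibre to kill the cross term $\langle\mu,\nabla_{\overline k}\eta\rangle$ before the second differentiation. There is nothing to add.
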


The values of
$
  \ol{\partial}^*\!(\nabla_k \mu|_s)
$
as well as
$
  \ol{\partial}(\nabla_k \mu|_s)
$
are of particular importance for the computation of the curvature. For the first type we find:

\begin{lemma}\label{lemma:dbarstarofnablamu}
If
$
  \ol{\partial}^*(\mu|_s)  =  0
$
for every $s \in S$ then also
\[
  \ol{\partial}^*(\nabla_k \mu|_s)  =  0
\]
holds for every $s \in S$.
\end{lemma}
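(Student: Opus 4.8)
The goal is to show that if $\mu$ restricts to a $\ol\partial^*$-closed form on every fiber, then so does $\nabla_k\mu$. The plan is to commute $\nabla_k$ past the fiberwise $\ol\partial^*$ operator and collect the correction terms. First I would recall that on a fiber $X\times\{s\}$, with respect to $g$ and $h_s$, we have $\ol\partial^* = -\sqrt{-1}\Lambda_g\,\partial^h$ (up to sign conventions), where $\partial^h$ is the $(1,0)$-part of the Chern connection of $h_s$; equivalently $\ol\partial^* = -\sqrt{-1}[\Lambda_g,\partial^h]$ on $(0,q)$-forms. Since $g$ is fixed and the total-space metric is the product metric, $\Lambda_g$ is parallel in the $s$-directions, so the only obstruction to commuting $\nabla_k$ with $\ol\partial^*$ is the commutator of $\nabla_k$ with $\partial^h$ along the fiber, i.e.\ with the $\nabla_\alpha$'s. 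This is exactly governed by the mixed curvature $R^h_{k\ol\beta}$ and $R^h_{\alpha\ol k}$-type terms via \eqref{eq:curvh}.

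The key step is therefore a Bochner–Kodaira–type commutation. Concretely I would write, for a local frame, $(\ol\partial^*\mu)|_s$ in terms of $\nabla^\alpha\mu_{\ldots\overline\beta_j\ldots}$ contractions, apply $\nabla_k$, and use \eqref{eq:curvh} in the form $\nabla_k\nabla_\alpha = \nabla_\alpha\nabla_k + R^h_{\alpha k}$-corrections (here the relevant curvature components are the mixed ones $R^h_{k\ol\alpha}$, $R^h_{\alpha\ol k}$; note $F$ is holomorphic so $R^h$ has no $(2,0)$ or $(0,2)$ part, hence $R^h_{\alpha k}=0$ and only the genuinely mixed pieces survive). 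This produces $\ol\partial^*(\nabla_k\mu)|_s = \nabla_k(\ol\partial^*\mu)|_s + (\text{zeroth-order curvature term acting on }\mu|_s)$. The first term vanishes by hypothesis since $\ol\partial^*\mu|_s\equiv 0$ on all of $S$, so its $\nabla_k$-derivative also vanishes. The remaining curvature term is where I expect the main obstacle: one must check it vanishes, and this should follow from the Hermite–Einstein condition. Indeed, the mixed curvature component $R^h_{k\ol\beta}$ is, up to the identification in \eqref{intro:overhaus}, essentially $\nabla_{\ol\beta}(\rho_k)$ — the fiberwise $\ol\partial$-derivative of the harmonic Kodaira–Spencer form — which vanishes because $\rho_k$ is harmonic, while the purely-fiber curvature $R^h_{\alpha\ol\beta}$ contracted with $\Lambda_g$ is the constant Einstein factor and commutes through.

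So the structure of the argument is: (i) express fiberwise $\ol\partial^*$ via $\Lambda_g$ and the Chern connection; (ii) commute $\nabla_k$ through, using \eqref{eq:curvh}, isolating a zeroth-order term with coefficient built from mixed components of $R^h$; (iii) kill the "derivative of $\ol\partial^*\mu$" term by the hypothesis; (iv) kill the curvature term using that $R^h_{k\ol\beta}|_s$ represents $\nabla_{\ol\beta}\rho_k$ and is harmonic/$\ol\partial$-closed in the fiber direction, together with the Hermite–Einstein normalization making the $\Lambda_g R^h_{\alpha\ol\beta}$ part scalar. The main obstacle is bookkeeping the curvature correction in (ii) and recognizing in (iv) that the Hermite–Einstein condition is precisely what forces the leftover term to be zero rather than merely a constant multiple of something; care with sign and ordering conventions for $\ol\partial^*$, $\Lambda_g$, and the wedge products will be the technical heart of the computation.
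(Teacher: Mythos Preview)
Your plan starts along the right lines and in fact coincides with the paper's argument through step~(iii): write $\ol\partial^*$ on the fiber as a contraction $g^{\ol\delta\gamma}\nabla_\gamma(\,\cdot\,)$, commute $\nabla_k$ past $\nabla_\gamma$, and use the hypothesis to kill $\nabla_k(\ol\partial^*\mu|_s)$. You even state the decisive observation yourself: the Chern curvature of a holomorphic bundle is of type $(1,1)$, so $R^h_{\gamma k}=0$ and hence $[\nabla_k,\nabla_\gamma]=0$ (the Levi--Civita part contributes nothing either, since the metric on $X\times S$ is a product). At that point the proof is \emph{finished}; this is exactly what the paper does in one line.

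The problem is your step~(iv). After noting $R^h_{\alpha k}=0$ you go on to say that ``genuinely mixed pieces'' $R^h_{k\ol\alpha}$, $R^h_{\alpha\ol k}$ survive and must be killed with the Hermite--Einstein condition. They do not survive: commuting $\nabla_k$ with $\nabla_\gamma$ produces only $R^h_{k\gamma}$, never $R^h_{k\ol\gamma}$, because no $\nabla_{\ol\gamma}$ appears in $\ol\partial^*$. There is simply no leftover zeroth-order term. Moreover, the content of your proposed (iv) is itself mistaken: $R^h_{k\ol\beta}$ is (up to sign) the \emph{component} $(\rho_k)_{\ol\beta}$ of the Kodaira--Spencer form, not its fiberwise $\ol\partial$-derivative, and it certainly does not vanish for a nontrivial family; nor does $\Lambda_g R^h_{\alpha\ol\beta}$ enter the commutator at all. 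In particular, the Hermite--Einstein hypothesis plays no role in this lemma. Drop step~(iv) entirely and your argument is the paper's proof.
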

\begin{proof}
Since the connection and curvature forms of $h$ in the direction of $X$ restricted to $X\times \{s\}$ equal the connection and curvature forms resp.\ of $h_s$, we have
\begin{gather*}
g^{\ol{\delta \gamma}}\nabla_\gamma(\nabla_k \mu_{\ol{\beta}_1,\ldots,\ol{\beta}_{q-1}|\ol{ \delta}}|_s )=  (g^{\ol{\delta \gamma}}\nabla_k\nabla_\gamma  \mu_{\ol{\beta}_1,\ldots,\ol{\beta}_{q-1}|\ol{ \delta}})|_s\\ \hspace{5cm} = (\nabla_k (g^{\ol{\delta} \gamma} \nabla_\gamma  \mu_{\ol{\beta}_1,\ldots,\ol{\beta}_{q-1}|\ol{ \delta}}) )|_s
\end{gather*}
which implies the claim.
\end{proof}

In order to compute expressions of the second type $\ol{\partial}(\nabla_k \mu|_s)$ we have to introduce some more notation. If $A$ is a $(p,q)$ form with values in some endomorphism bundle $\mathrm{End}(E)$, we agree to denote with $A \cup$ the operator on $(r,s)$ forms with values in $E$ which consists of the application of the endomorphism part and an exterior multiplication of the form parts. We will use $A^* \cap$ to denote the formal adjoint of $A \cup$.

In local coordinates we need the following case: Let $A=\sum A_\ol \delta dz^{\ol \delta}$ be an
${\rm End}(E)$-valued $(0,1)$-form, and $\mu=(1/q!) \sum \mu_{\ol\beta_1,\ldots,\ol \beta_q}dz^{\ol \beta_1}\wedge\ldots\wedge dz^{\ol \beta_q}$ an $E$-valued $(0,q)$-form. Then
\begin{equation}\label{eq:cup}
A\cup \mu = \frac{1}{(q+1)!} \sum A_{\ol\beta_0}(\mu_{\ol\beta_1,\ldots,\ol \beta_q})dz^{\ol \beta_0}\we\ldots\we dz^{\ol \beta_q},
\end{equation}
and for an $E$-valued $(0,q+1)$-form $\sigma$ (whose coefficients are already assumed to be skew-symmetric) we have
\begin{equation}\label{eq:cap}
A^* \cap \sigma =\sum g^{\ol \delta \gamma} A^*_{\gamma}( \sigma_{\ol \delta, \ol \beta_1,\ldots, \ol\beta_q}) dz^{\ol \beta_1}\we\ldots dz^{\ol \beta_q}.
\end{equation}

Furthermore, we will use the abbreviations
$$
  \rho_k
  :=
  - \partial_k \,\lrcorner\, \Omega^h
\text{\quad and \quad}
  \rho_{k\overline{l}}
  :=
  \partial_{\overline{l}} \,\lrcorner\, (\partial_k \,\lrcorner\, \Omega^h)
$$
as well as some obvious variations on these, where again $\Omega^h$ denotes the curvature form of $h$. Note that, in particular, we can use these to describe the harmonic representatives of Kodaira-Spencer classes, because \eqref{intro:overhaus} implies that in our notation the harmonic representative $\rho_s(\partial_k|_s)$ equals $\rho_k|_s$.
After this remark we obtain:

\begin{lemma}\label{lemma:dbarofnablamu}
Let $\mu\in\cA^{0,q}(X\times S,F)$ with $\ol{\partial}(\mu|_s)=0$ for every $s \in S$. Then
\[
  \ol{\partial}(\nabla_k \mu|_s)  =   \rho_k|_s \cup  \mu|_s
\]
for every $s \in S$.
\end{lemma}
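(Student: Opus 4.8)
The plan is to compute $\ol{\partial}(\nabla_k\mu|_s)$ directly in local coordinates, exploiting the fact that on a fiber the relative $\ol{\partial}$-operator coincides with the honest $\ol{\partial}$ (since $\nabla_{\ol\beta}=\pt_{\ol\beta}$ on $X$-directions), so that the whole computation reduces to commuting $\pt_k$ with $\pt_{\ol\beta}$ and keeping track of the connection term $\theta^h_k$.

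First I would write $\mu=(1/q!)\sum\mu_{\ol\beta_1,\ldots,\ol\beta_q}\,dz^{\ol\beta_1}\we\ldots\we dz^{\ol\beta_q}$ on $X\times\{s\}$, so $\nabla_k\mu|_s$ has coefficients $\pt_k\mu_{\ol\beta_1,\ldots,\ol\beta_q}+\theta^h_k\circ\mu_{\ol\beta_1,\ldots,\ol\beta_q}$. Applying $\ol{\partial}$ on the fiber means antisymmetrizing $\nabla_{\ol\gamma}=\pt_{\ol\gamma}$ over the free barred $X$-indices. For the $\pt_k\mu$ part I would use that $\mu|_s$ is $\ol{\partial}$-closed, i.e.\ the antisymmetrization of $\pt_{\ol\gamma}\mu_{\ol\beta_1,\ldots,\ol\beta_q}$ vanishes; differentiating this identity in the $s^k$-direction shows that the antisymmetrization of $\pt_{\ol\gamma}\pt_k\mu_{\ol\beta_1,\ldots,\ol\beta_q}$ also vanishes. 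Hence $\ol{\partial}(\nabla_k\mu|_s)$ collapses to the contribution of the connection term, namely the antisymmetrization of $\pt_{\ol\gamma}(\theta^h_k)\circ\mu_{\ol\beta_1,\ldots,\ol\beta_q}$ plus $\theta^h_k\circ(\ol{\partial}\mu|_s)$; the latter is zero, and $\pt_{\ol\gamma}\theta^h_k=-R^h_{k\ol\gamma}$ by the curvature convention \eqref{eq:curvh}.

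Next I would identify the resulting expression with $\rho_k|_s\cup\mu|_s$. By definition $\rho_k=-\pt_k\lrcorner\,\Omega^h$, so its $(0,1)$-part on the fiber has coefficients $-\Omega^h_{k\ol\gamma}=-R^h_{k\ol\gamma}=\pt_{\ol\gamma}\theta^h_k$ (one must check the sign and that only the $dz^{\ol\gamma}$-components of $\Omega^h$ survive in $\rho_k\cup$ acting on a $(0,q)$-form — the $(1,0)$- and $ds$-parts of $\Omega^h$ contract against barred $X$-forms to give nothing of the right type on the fiber, or are absent by the Hermite-Einstein/Kähler structure). Comparing with the coordinate formula \eqref{eq:cup} for $A\cup\mu$ with $A=\rho_k|_s$ then gives exactly the antisymmetrized endomorphism-and-wedge term produced above, completing the identification.

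The main obstacle I expect is bookkeeping rather than conceptual: getting the combinatorial signs from the antisymmetrization right (the $(-1)^{q+1}$ and $(-1)^{\nu+1}$ factors already visible in Lemma~\ref{lemma:covarianttoexterior}), and carefully checking that the mixed components of $\Omega^h$ (the $dz^\alpha\we dz^{\ol\beta}$ part and any $ds\we \overline{dz}$ cross terms) genuinely drop out, so that $\rho_k\lrcorner$ indeed contributes only through the pure $(0,1)$-in-$X$ piece $-R^h_{k\ol\gamma}dz^{\ol\gamma}$. Once those sign and degree checks are done, the identity $\ol{\partial}(\nabla_k\mu|_s)=\rho_k|_s\cup\mu|_s$ falls out of matching coefficients.
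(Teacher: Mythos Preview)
Your proposal is correct and is essentially the paper's argument. The paper packages the computation via the commutator identity \eqref{eq:curvh}, writing $\nabla_{\ol\beta_0}\nabla_k\,\mu_{\ol\beta_1,\ldots,\ol\beta_q}=\nabla_k\nabla_{\ol\beta_0}\mu_{\ol\beta_1,\ldots,\ol\beta_q}-R^h_{k\ol\beta_0}(\mu_{\ol\beta_1,\ldots,\ol\beta_q})$ and then invoking \eqref{eq:harmrep}; your unpacking of $\nabla_k=\pt_k+\theta^h_k$ together with $\pt_{\ol\gamma}\theta^h_k=-R^h_{k\ol\gamma}$ is the same step written out, and your worry about mixed $ds$-components of $\Omega^h$ is harmless since these drop upon restriction to the fiber.
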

\begin{proof}
We observe that
$$
\nabla_{\ol{\beta}_{0}}\nabla_k\, \mu_{\ol \beta_1, \ldots, \ol \beta_q} = \nabla_k\nabla_{\ol \beta_{0}} \mu_{\ol \beta_1, \ldots, \ol \beta_q} - R^h_{k\ol \beta_{0}}( \mu_{\ol \beta_1,\ldots,\ol\beta_q})
$$
and replace
\begin{equation}\label{eq:harmrep}
R^h_{k \ol{\beta}} dz^\ol\beta = \pt_k\, \lrcorner\, \Omega^h= - \rho_k.
\end{equation}
\end{proof}

We continue the calculation of the curvature. First we know from the construction of the good representatives $\xi_\rho$ of the Dolbeault classes that the restrictions
$
  \left.
    \xi_\rho
  \right|_s
$
are harmonic and the above identities are applicable.  In particular, Corollary \ref{corollary:secondderivatives} together with \eqref{curvature:curvature} implies
\[
  R_{\rho\sigma k \overline{l}}(s_0)
  =
  -
  \big(
    \langle
      \nabla_k \xi_\rho, \nabla_l \xi_\sigma
    \rangle
    (s_0)
    +
    \langle
      \nabla_{\overline{l}} \nabla_k \xi_\rho, \xi_\sigma
    \rangle
    (s_0)
  \big)
  =:
  -(S_1 + S_2).
\]
Using Corollary~\ref{corollary:vanishingofproduct} and the fact that we chose normal coordinates at $s_0$ we find
\[
  \langle \nabla_k \xi_\rho|_{s_0}, \xi_\sigma|_{s_0}\rangle  =\partial_k\langle\xi_\rho, \xi_\sigma\rangle(s_0) = \partial_k H_{\rho\overline{\sigma}}(s_0)  =  0
\]
i.e.\ the derivatives $\nabla_k\xi_\rho|_{s_0}$ are perpendicular to the space of harmonic forms $\mathcal H^{0,q}(X,F_{s_0})$ that is spanned by all forms $\xi_\sigma|_{s_0}$.
Hence, the harmonic projections $H(\nabla_k \xi_\rho|_{s_0})$ vanish so that by Lemma~\ref{lemma:dbarstarofnablamu}
\[  \nabla_k \xi_\rho|_{s_0} =\ol{\partial} G  \ol{\partial}^*(\nabla_k \xi_\rho|_{s_0})  + \ol{\partial}^*  G\ol{\partial}(\nabla_k \xi_\rho|_{s_0}) = \ol{\partial}^* G \ol{\partial}(   \nabla_k \xi_\rho|_{s_0}),
\]
where $G$ denotes the respective Green operator. As a consequence we establish for the first summand $S_1$
\begin{align*}
  S_1
  &=  \langle \nabla_k \xi_\rho, \nabla_l \xi_\sigma\rangle(s_0)  =  \big\langle   \ol{\partial}^* G \ol{\partial}(\nabla_k \xi_\rho),\nabla_l \xi_\sigma \big\rangle (s_0)  \\
  &=  \big\langle  G \ol{\partial}( \nabla_k  \xi_\rho),\ol{\partial} \nabla_l \xi_\sigma\big\rangle(s_0)  \\
    &= \big\langle G(\rho_k\cup\xi_\rho), \rho_l\cup\xi_\sigma \big\rangle(s_0)
\end{align*}
where we used Lemma \ref{lemma:dbarofnablamu} in the last step.

In order to calculate the second summand $S_2$ we first proceed with collecting further relations. For this purpose we note the following local formula
\begin{equation}\label{curvature:liebracketcovariantderiv}
  \nabla_k \nabla_{\overline{l}} \,\mu - \nabla_{\overline{l}} \nabla_k\, \mu
  =
  \rho_{k\overline{l}} \cup \mu
\end{equation}
which follows from \eqref{eq:curvh} and the definition of $\rho_{k\overline{l}}$. (It only depends on $h$ because of the choice of the Kähler metric on $X\times S$). We denote by $\Box$ the Laplacian (with non-negative eigenvalues) for $F_s$-valued $(0,q)$-forms.

\begin{lemma}\label{lemma:laplacerhokl}
In our situation on all fibers $X\times \{s\}$ the equation
\[\Box( \rho_{k\overline{l}}) =\sqrt{-1} \Lambda_g\left[\rho_k, \rho_{\overline{l}}
  \right]
\]
holds.
\end{lemma}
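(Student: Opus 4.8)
The plan is to reduce the statement to the fiberwise $\ol\partial$-Laplacian on $\mathrm{End}(F_s)$-valued $(0,0)$-forms, on which $\Box=\ol\partial^*\ol\partial$, and then to commute the fiber operators $\ol\partial^*$, $\Lambda_g$ and the $(1,0)$-part $\partial^\nabla$ of the Chern connection (on $\mathrm{End}(F_s)$) past the base derivative $\nabla_{\overline{l}}$, using the product K\"ahler structure on $X\times S$ and the curvature commutation rules \eqref{eq:curvh}--\eqref{eq:curvhend}. The actual content sits in three steps: a Bianchi identity that rewrites $\ol\partial\rho_{k\overline{l}}$; the commutation of $\partial^\nabla$ with $\nabla_{\overline{l}}$, whose correction term turns out to be precisely $[\rho_k,\rho_{\overline{l}}]$; and the vanishing of a remaining term, which is where the Hermite--Einstein hypothesis enters.

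First I would record the curvature interpretations of the contractions of $\Omega^h$. Restricted to a fiber, $\rho_{k\overline{l}}$ equals the mixed base component $R^h_{k\overline{l}}$ of $\Omega^h$; the $(0,1)$-form $\rho_k$ has coefficients $(\rho_k)_{\overline{\beta}}=-R^h_{k\overline{\beta}}$ (this is \eqref{eq:harmrep}); and the $(1,0)$-form $\rho_{\overline{l}}$ has coefficients $(\rho_{\overline{l}})_\alpha=R^h_{\alpha\overline{l}}$. The Bianchi identity $\ol\partial\Omega^h=0$ on $X\times S$ (equivalently, the commutation $\nabla_{\overline{\beta}}\nabla_{\overline{l}}\theta^h_k=\nabla_{\overline{l}}\nabla_{\overline{\beta}}\theta^h_k$, which holds because the Chern curvature has no $(0,2)$-part) gives $\nabla_{\overline{\beta}}R^h_{k\overline{l}}=\nabla_{\overline{l}}R^h_{k\overline{\beta}}$, that is,
\[
  \ol\partial\rho_{k\overline{l}}=-\nabla_{\overline{l}}\rho_k
\]
on every fiber, in the spirit of Lemma~\ref{lemma:dbarofnablamu}.

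Since $\rho_{k\overline{l}}$ is a $(0,0)$-form, $\Box\rho_{k\overline{l}}=\ol\partial^*\ol\partial\rho_{k\overline{l}}=-\ol\partial^*(\nabla_{\overline{l}}\rho_k)$. As $\Lambda_g$ annihilates $(0,1)$-forms, the K\"ahler identity for $\mathrm{End}(F_s)$-valued forms reads $\ol\partial^*=-\sqrt{-1}\,\Lambda_g\partial^\nabla$ on them, whence $\Box\rho_{k\overline{l}}=\sqrt{-1}\,\Lambda_g\,\partial^\nabla\nabla_{\overline{l}}\rho_k$. Commuting $\partial^\nabla$ past $\nabla_{\overline{l}}$ by \eqref{eq:curvhend}: in coordinates $(\nabla_\alpha\nabla_{\overline{l}}-\nabla_{\overline{l}}\nabla_\alpha)(\rho_k)_{\overline{\beta}}=[R^h_{\alpha\overline{l}},(\rho_k)_{\overline{\beta}}]=[(\rho_{\overline{l}})_\alpha,(\rho_k)_{\overline{\beta}}]$, and after wedging with $dz^\alpha\wedge dz^{\overline{\beta}}$ this says exactly $\partial^\nabla\nabla_{\overline{l}}\rho_k=\nabla_{\overline{l}}\partial^\nabla\rho_k+[\rho_k,\rho_{\overline{l}}]$. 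Hence
\[
  \Box\rho_{k\overline{l}}=\sqrt{-1}\,\Lambda_g\,\nabla_{\overline{l}}\partial^\nabla\rho_k+\sqrt{-1}\,\Lambda_g[\rho_k,\rho_{\overline{l}}].
\]
The first summand vanishes: since the metric on $X$ is $s$-independent and the product metric is flat along $S$, $\Lambda_g$ commutes with $\nabla_{\overline{l}}=\partial_{\overline{l}}$, so that summand equals $\sqrt{-1}\,\nabla_{\overline{l}}(\Lambda_g\partial^\nabla\rho_k)$; and on each fiber $\Lambda_g\partial^\nabla\rho_k=\sqrt{-1}\,\ol\partial^*\rho_k=0$ because $\rho_k|_s=-(\partial_k\,\lrcorner\,\Omega^h)|_s$ is the \emph{harmonic} Kodaira--Spencer representative by \eqref{intro:overhaus} and the Hermite--Einstein condition \cite{Ov92,ST92}. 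This leaves $\Box\rho_{k\overline{l}}=\sqrt{-1}\,\Lambda_g[\rho_k,\rho_{\overline{l}}]$.

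I expect the main obstacle to be bookkeeping rather than anything deep: one has to keep the fiber operators ($\ol\partial$, $\ol\partial^*$, $\Lambda_g$, $\partial^\nabla$, $\Box$) strictly apart from the base derivative $\nabla_{\overline{l}}$, verify that the choice of product metric is exactly what legitimates applying \eqref{eq:curvh}--\eqref{eq:curvhend} across the $X$- and $S$-directions, and nail down the signs together with the numerical factor in $\Lambda_g(dz^\alpha\wedge dz^{\overline{\beta}})$ dictated by the normalization $\omega=\tfrac{\sqrt{-1}}{2}g_{\alpha\overline{\beta}}dz^\alpha\wedge dz^{\overline{\beta}}$. None of these is difficult, but they are the places where a slip would occur.
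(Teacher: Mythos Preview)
Your argument is correct and is essentially the paper's own proof rewritten in the language of K\"ahler identities: both use the Bianchi identity to replace $\nabla_{\overline{\beta}}R^h_{k\overline{l}}$ by $\nabla_{\overline{l}}R^h_{k\overline{\beta}}$, then commute $\nabla_\gamma$ past $\nabla_{\overline{l}}$ via \eqref{eq:curvhend} to produce the bracket term, and finally kill the remaining term with the Hermite--Einstein condition. The only cosmetic difference is that the paper works in raw index notation and invokes $g^{\overline{\delta}\gamma}R^h_{\gamma\overline{\delta}}=\mathrm{const}\cdot\mathrm{id}$ directly, whereas you pass through $\overline{\partial}^*=-\sqrt{-1}\Lambda_g\partial^\nabla$ and use the harmonicity $\overline{\partial}^*\rho_k=0$, which is the same fact one step removed.
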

\begin{proof}
Let $R^h$ be the ${\rm End}(F)$-valued curvature tensor of $h$ over $X\times S$ as above.
The quantity $R^h_{k\ol l}$\/, when restricted to a fiber, has to be treated as a differentiable section of the endomorphism bundle of $F_s$. We compute
\begin{align*}
\Box(R^h_{k\ol l}) &= \ol\pt^* \ol\pt R^h_{k\ol l} = - g^{\ol \delta \gamma} \nabla_\gamma\nabla_\ol\delta R^h_{k \ol l}= - g^{\ol \delta \gamma} \nabla_\gamma\nabla_\ol l R^h_{k \ol \delta}\\
&=- g^{\ol \delta \gamma} [R^h_{\gamma \ol l}, R^h_{k \ol \delta}] - g^{\ol \delta \gamma} \nabla_\ol l\nabla_k  R^h_{\gamma \ol  \delta}.
\end{align*}
The very last term vanishes because of the Hermite-Einstein condition, since the degree of the bundles is constant. So the claim follows as above from \eqref{eq:harmrep}.
\end{proof}

We need two properties of the forms $F_{\overline{l}}(\mu)$ from Lemma \ref{lemma:covarianttoexterior}:

\begin{lemma}\label{lemma:property2ofCTEconstruction}
If $\ol{\partial} \mu = 0$ and $\ol{\partial}^*\!\mu|_s = 0$
for every $s \in S$, we obtain fiberwise for all $s$
\[
  \ol{\partial}^* \ol{\partial}(F_{\overline{l}}(\mu)) =\rho^*_{l}  \cap \mu.
\]
\end{lemma}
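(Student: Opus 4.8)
The plan is to compute $\ol{\partial}(F_{\overline{l}}(\mu))$ explicitly from the coordinate formula \eqref{lemma:covarianttoexterior:p2} for $F_{\overline{l}}(\mu)$, apply $\ol{\partial}^*$ on a fixed fiber, and then recognize the result as $\rho_l^* \cap \mu$ using the local expression \eqref{eq:cap} for the $\cap$-operation together with \eqref{eq:harmrep}, which identifies $\rho_l|_s = -R^h_{l\ol\delta}dz^{\ol\delta}$ (with the fiberwise curvature components of $h_s$). Concretely, from Lemma~\ref{lemma:covarianttoexterior} we already know $\ol{\partial}(F_{\overline{l}}(\mu)|_s) = (\nabla_{\overline{l}}\mu)|_s$, so the quantity to analyze is $\ol{\partial}^*\!\big((\nabla_{\overline{l}}\mu)|_s\big)$. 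The first step is therefore to expand $\ol{\partial}^*(\nabla_{\ol l}\mu)|_s$ in coordinates: $\ol{\partial}^*$ on $(0,q)$-forms is (up to sign) the contraction $-g^{\ol\delta\gamma}\nabla_\gamma(\,\cdot\,)_{\ol\delta,\ldots}$, so this becomes $-g^{\ol\delta\gamma}\nabla_\gamma\nabla_{\ol l}\,\mu_{\ol\delta,\ol\beta_1,\ldots,\ol\beta_{q-1}}\big|_s$.

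**Key steps.** Next I would commute the covariant derivatives $\nabla_\gamma$ and $\nabla_{\ol l}$ using \eqref{eq:curvh}, exactly as in the proof of Lemma~\ref{lemma:dbarstarofnablamu} and Lemma~\ref{lemma:laplacerhokl}: one gets $\nabla_\gamma\nabla_{\ol l}\,\mu_{\ldots} = \nabla_{\ol l}\nabla_\gamma\,\mu_{\ldots} + R^h_{\gamma\ol l}(\mu_{\ldots})$ (signs to be checked against the conventions in \eqref{eq:curvh}). The term involving $\nabla_{\ol l}\nabla_\gamma$ contracts, after multiplying by $g^{\ol\delta\gamma}$, to $\nabla_{\ol l}$ applied to $-g^{\ol\delta\gamma}\nabla_\gamma\mu_{\ol\delta,\ldots} = \ol{\partial}^*(\mu|_s)$-type expression; but by hypothesis $\ol{\partial}^*(\mu|_s)=0$ identically in $s$, so its $\nabla_{\ol l}$-derivative along the base also vanishes. (This is the same mechanism that made the ``very last term'' disappear in Lemma~\ref{lemma:laplacerhokl} and the analogous cancellation in Lemma~\ref{lemma:dbarstarofnablamu}.) Hence only the curvature term survives, leaving $-g^{\ol\delta\gamma}R^h_{\gamma\ol l}(\mu_{\ol\delta,\ol\beta_1,\ldots,\ol\beta_{q-1}})$ up to sign. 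Finally, using $R^h_{\gamma\ol l}dz^{\ol l}$... more precisely writing $\rho_l = -R^h_{l\ol\delta}dz^{\ol\delta}$ from \eqref{eq:harmrep} so that $\rho_l^*$ has components involving $R^h_{\gamma\ol l}$, one matches the surviving term against the definition \eqref{eq:cap} of $\rho_l^*\cap\mu = g^{\ol\delta\gamma}(\rho_l)^*_\gamma(\mu_{\ol\delta,\ol\beta_1,\ldots,\ol\beta_{q-1}})dz^{\ol\beta_1}\we\ldots\we dz^{\ol\beta_{q-1}}$, which is precisely the claimed identity.

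**Main obstacle.** The substantive calculations are all already present in the earlier lemmas (the commutation of $\nabla_\gamma$ and $\nabla_{\ol l}$, the vanishing-of-$\ol{\partial}^*\mu$ argument, the passage between $R^h$-components and the $\rho$-notation); the genuine difficulty is bookkeeping. I expect the main obstacle to be tracking the signs and the combinatorial factors $(-1)^{q+1}/(q-1)!$ coming from the explicit formula \eqref{lemma:covarianttoexterior:p2} for $F_{\overline{l}}(\mu)$, ensuring that the skew-symmetrization conventions built into \eqref{eq:cup}--\eqref{eq:cap} are respected, and confirming that the adjoint $\rho_l^*\cap$ is taken with respect to the fiberwise inner product $(\,\cdot\,,\,\cdot\,)$ induced by $g$ and $h_s$ (with variable $s$), so that the ``fiberwise for all $s$'' claim is uniform in the parameter. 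A secondary point to be careful about is that $\ol{\partial}^*\ol{\partial}$ here is applied to $F_{\overline{l}}(\mu)$, a $(0,q-1)$-form, and we are computing $\ol{\partial}^*$ of a $(0,q)$-form $\ol{\partial}F_{\overline{l}}(\mu) = (\nabla_{\ol l}\mu)|_s$; one must make sure the operator $\ol{\partial}^*$ used is the one for the correct form degree on $F_s$.
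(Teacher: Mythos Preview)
Your proposal is correct and follows essentially the same route as the paper: both start from $\ol{\partial}(F_{\overline{l}}(\mu)|_s)=(\nabla_{\overline{l}}\mu)|_s$, apply the fiberwise $\ol{\partial}^*$ in coordinates, commute $\nabla_\gamma$ and $\nabla_{\ol l}$ via \eqref{eq:curvh} so that the hypothesis $\ol{\partial}^*(\mu|_s)=0$ kills the $\nabla_{\ol l}\nabla_\gamma$-term, and then match the surviving curvature term against \eqref{eq:cap}. The only difference is cosmetic: the paper places the contracted index last, $\mu_{\ol\beta_1,\ldots,\ol\beta_{q-1},\ol\delta}$, picking up an explicit $(-1)^q$ in front, and then uses skew-symmetry to move it to the front; your version places it first from the outset, which is equivalent.
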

\begin{proof}
We have
\begin{align*}
\ol\pt^*\!\ol\pt(F_\ol l(\mu))&=\ol\pt^*\! (\nabla_\ol l \mu) \\
& =(-1)^q \sum g^{\ol \delta \gamma}\nabla_\gamma\nabla_\ol l \mu_{\ol \beta_1, \ldots, \ol \beta_{q-1},\ol \delta}\, dz^{\ol \beta_1}\we \ldots \we dz^{\ol \beta_{q-1}}\\
&=(-1)^{q+1}\sum g^{\ol \delta \gamma} R^h_{\gamma\ol l}(\mu_{\ol\beta_1,\ldots,\ol \beta_{q-1},\ol \delta})\, dz^{\ol \beta_1}\we \ldots \we dz^{\ol \beta_{q-1}}\\
&= - \sum g^{\ol \delta \gamma}R^h_{\gamma\ol l} (\mu_{\ol \delta, \ol \beta_1,\ldots, \ol \beta_{q-1}}) dz^{\ol \beta_1}\we\ldots\we dz^{\ol \beta_{q-1}}
\end{align*}
so that the claim follows with \eqref{eq:cap}.
\end{proof}

\begin{lemma}\label{lemma:property1ofCTEconstruction}
Let $\mu$ be a $\ol\pt$-closed $(0,q)$-form with values in $F$. Then on all fibers $X\times \{s\}$ the following identity holds.
\[(\nabla_k \nabla_{\overline{l}}\,\mu) =\ol{\partial}(\nabla_k F_{\overline{l}}(\mu)) -(\rho_k \cup F_{\overline{l}}(\mu)).
\]
\end{lemma}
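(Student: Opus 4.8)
The plan is to repeat, essentially verbatim, the local computation from the proof of Lemma~\ref{lemma:dbarofnablamu}, but with $F_{\overline{l}}(\mu)$ in place of a $\ol\pt$-closed form. The one structural difference is that $F_{\overline{l}}(\mu)$ is \emph{not} $\ol\pt$-closed along the fibers: by Lemma~\ref{lemma:covarianttoexterior} its relative $\ol\pt$ restricts on each fiber to $\nabla_{\overline{l}}\mu$. Consequently the term that disappeared in the earlier computation will here survive and account precisely for the left hand side $\nabla_k\nabla_{\overline{l}}\mu$.

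Concretely, I would fix a fiber $X\times\{s\}$, use the holomorphic coordinates $(z^\alpha)$, and write out $\ol\pt(\nabla_k F_{\overline{l}}(\mu))$ in components. Since $F$ is holomorphic and $\nabla_{\overline{\gamma}}=\pt_{\overline{\gamma}}$ on the fibers, the components of $\ol\pt(\nabla_k F_{\overline{l}}(\mu))$ are the alternating sums of $\nabla_{\overline{\beta}_0}\nabla_k (F_{\overline{l}}(\mu))_{\overline{\beta}_1,\ldots,\overline{\beta}_{q-1}}$. Applying the base-direction case of the commutation rule \eqref{eq:curvh}, namely $\nabla_{\overline{\beta}_0}\nabla_k\chi=\nabla_k\nabla_{\overline{\beta}_0}\chi-R^h_{k\overline{\beta}_0}\circ\chi$, splits this into two parts. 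In the first part the alternating sum of $\nabla_k\nabla_{\overline{\beta}_0}(F_{\overline{l}}(\mu))_{\overline{\beta}_1,\ldots,\overline{\beta}_{q-1}}$ is $\nabla_k$ applied to the components of the relative $\ol\pt$ of $F_{\overline{l}}(\mu)$; by Lemma~\ref{lemma:covarianttoexterior} this relative $\ol\pt$ agrees on every fiber with $\nabla_{\overline{l}}\mu$, so after applying $\nabla_k$ and restricting one obtains $(\nabla_k\nabla_{\overline{l}}\mu)|_s$. In the second part I would substitute \eqref{eq:harmrep}, i.e.\ $R^h_{k\overline{\beta}}\,dz^{\overline{\beta}}=-\rho_k$, exactly as in the proof of Lemma~\ref{lemma:dbarofnablamu}, so that the curvature term $-R^h_{k\overline{\beta}_0}$ applied to $(F_{\overline{l}}(\mu))_{\overline{\beta}_1,\ldots,\overline{\beta}_{q-1}}$, properly alternated, becomes $\rho_k\cup F_{\overline{l}}(\mu)$ in the sense of \eqref{eq:cup}. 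Adding the two contributions gives $\ol\pt(\nabla_k F_{\overline{l}}(\mu))=(\nabla_k\nabla_{\overline{l}}\mu)+\rho_k\cup F_{\overline{l}}(\mu)$ on every fiber, and moving the cup-product term to the other side is the asserted identity.

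The step I expect to demand the most care is the identification of the first part with $\nabla_k\nabla_{\overline{l}}\mu$: one must check that $\nabla_k$ commutes with the relative exterior derivative and with restriction to a fiber in the way used, and that the components of $\mu$, $\nabla_{\overline{l}}\mu$ and $F_{\overline{l}}(\mu)$ carrying a $ds^{\overline{m}}$-factor are irrelevant after restriction. This is the same kind of bookkeeping already implicit in Lemmas~\ref{lemma:covarianttoexterior}, \ref{lemma:dbarofnablamu} and~\ref{lemma:property2ofCTEconstruction}, so no new idea is needed, only a consistent choice of the combinatorial normalizations that makes the cup product \eqref{eq:cup} appear with the stated sign.
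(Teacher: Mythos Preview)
Your proposal is correct and follows essentially the same route as the paper: both arguments start from the fiberwise identity $\ol\pt(F_{\overline l}(\mu))=\nabla_{\overline l}\mu$ of Lemma~\ref{lemma:covarianttoexterior}, apply $\nabla_k$, and use the commutation relation between $\nabla_k$ and $\nabla_{\overline\beta}$ to produce the curvature term, which becomes $\rho_k\cup F_{\overline l}(\mu)$ via \eqref{eq:harmrep}. The paper cites \eqref{eq:curvhend} for this step, but since $F_{\overline l}(\mu)$ is $F$-valued rather than $\mathrm{End}(F)$-valued, your use of \eqref{eq:curvh} is in fact the appropriate reference.
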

\begin{proof}
We pick up the statement of Lemma \ref{lemma:covarianttoexterior} and compute
$\nabla_k \ol\pt F_\ol l(\mu)$. The claim follows from \eqref{eq:curvhend}.
\end{proof}

Now we will finish the proof of Theorem~\ref{thm:maintheorem} by treating the summand $S_2$. The first step is to use \eqref{curvature:liebracketcovariantderiv} and split $S_2$ into two parts. For the inner product taken at $s_0$ we have
\begin{align*}
  \nonumber
  S_2
  &=
  \langle
    \nabla_{\overline{l}} \nabla_k \xi_\rho, \xi_\sigma
  \rangle
  =
  \langle
    \nabla_k \nabla_{\overline{l}} \xi_\rho, \xi_\sigma
  \rangle
  -
  \langle
    \rho_{k\overline{l}} \cup \xi_\rho, \xi_\sigma
  \rangle \\
  &=:
  S_{2a} + S_{2b}\, .
\end{align*}
For $S_{2a}$  we get from Lemma \ref{lemma:property1ofCTEconstruction} that
\begin{align*}
  S_{2a}
  &=
  \langle
    \ol{\partial}( \nabla_k F_{\overline{l}}(\xi_\rho)), \xi_\sigma\rangle -\langle \rho_k\cup F_{\overline{l}}(\xi_\rho), \xi_\sigma \rangle  \\
      &=
  \big\langle (\nabla_k F_{\overline{l}}(\xi_\rho)), \ol{\partial}^* \xi_\sigma \big\rangle - \langle F_{\overline{l}}(\xi_\rho), \rho^*_k \cap \xi_\sigma \rangle
  \end{align*}
holds on $X\times \{s_0\}$.
By assumption the forms $\xi_\rho$ are fiberwise harmonic: $\ol{\partial}^*\xi_\sigma = 0$. We apply Lemma~\ref{lemma:property2ofCTEconstruction}, and again at $s_0$ we obtain
\begin{align*}
  S_{2a}&= - \langle  F_{\overline{l}}(\xi_\rho),\ol{\partial}^* \ol{\partial}(F_{\overline{k}}(\xi_\sigma))\rangle   =
  - \langle \ol{\partial} F_{\overline{l}}(\xi_\rho), \ol{\partial}(F_{\overline{k}}(\xi_\sigma))\rangle .
\end{align*}
We use the fiberwise equation
\[
  \ol{\partial} F_{\overline{l}}(\xi_\rho)  =  \ol{\partial} G\ol{\partial}^* \ol{\partial} F_{\overline{l}}(\xi_\rho)
\]
and, consequently, from Lemma~\ref{lemma:property2ofCTEconstruction} we get:
\begin{align*}
  S_{2a} &=-\big\langle G \ol{\partial}^*\ol{\partial} F_{\overline{l}}(\xi_\rho),\ol{\partial}^*\ol{\partial}(F_{\overline{k}}(\xi_\sigma))\big\rangle =-\big\langle  G(\rho_{l}^* \cap \xi_\rho),\rho_{k}^* \cap\xi_\sigma\big\rangle.
\end{align*}
For the last summand $S_{2b}$ we apply Lemma \ref{lemma:laplacerhokl} to obtain fiberwise
\begin{align*}
  \rho_{k\overline{l}}&=
  H(\rho_{k\overline{l}}) + G (\sqrt{-1} \Lambda_g \left[\rho_k,\rho_{\overline{l}}\right]).
\end{align*}
Hence, again fiberwise
\begin{align*}
  S_{2b}
  &=
  -
  \langle
      \rho_{k\overline{l}}
    \cup
      \xi_\rho,
      \xi_\sigma
  \rangle  \\
  &=
  -
  \langle
    G
    \left(
      \sqrt{-1}
      \Lambda_g
      \left[
          \rho_k,
          \rho_{\overline{l}}
      \right]
    \right)
    \cdot
      \xi_\rho,
      \xi_\sigma
  \rangle
  -
  \langle
    H
    \left(
        \rho_{k\overline{l}}
    \right)
    \cdot
      \xi_\rho,
      \xi_\sigma
  \rangle.
\end{align*}
This concludes the proof of Theorem \ref{thm:maintheorem}. \qed

In the sequel we discuss the above term  $H(\rho_{k\overline{l}})$. Note that $H(\rho_{k\overline{l}})(s)$ is a harmonic section of ${\rm End}(F_s)$ i.e.\ a constant multiple of the identity, since the holomorphic vector bundles $F_s$ are simple by assumption. So $H(\rho_{k\overline{l}})$ can be identified with a differentiable function on $S$. This argument yields the following lemma.
\begin{lemma}\label{le:Hkl}
Let $r= {\rm rk}(F)$. Then the ${\rm End}(F_s)$-valued harmonic sections $H(\rho_{k\overline{l}})(s)$ are of the form
$\Phi_{k\ol l}(s) \cdot {\rm id}_{F_s}$ satisfying the equation
\begin{equation}\label{eq:Hkl}
\Phi_{k\ol l}(s) = \frac{1}{r}  \int_{X \times\{s\}} {\rm tr}(R^h_{k \ol l}) \, \omega^n\Big/\int_X \omega^n.
\end{equation}
\end{lemma}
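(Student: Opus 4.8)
The plan is to identify $\rho_{k\ol l}$, restricted to a fibre, with a single curvature component of $h$, and then to pin down the resulting scalar factor by pairing with the identity endomorphism. First I would note that on each fibre $X\times\{s\}$ one has $\rho_{k\ol l}|_s=R^h_{k\ol l}|_s$. Indeed, expanding $\Omega^h$ in the coordinates $(z,s)$, the contraction $\partial_k\,\lrcorner\,\Omega^h$ is the sum of $R^h_{k\ol\beta}\,dz^{\ol\beta}$ (which equals $-\rho_k$ by \eqref{eq:harmrep}) and terms carrying an $\ol{ds}$-differential; contracting once more with $\partial_{\ol l}$ isolates exactly the component $R^h_{k\ol l}$. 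This is precisely the identification already exploited in the proof of Lemma~\ref{lemma:laplacerhokl}.

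Next, as recalled in the paragraph preceding the statement, $H(\rho_{k\ol l})(s)$ is a harmonic section of ${\rm End}(F_s)$; being a $(0,0)$-form it is in fact holomorphic, and simplicity of $F_s$ forces it to have the form $\Phi_{k\ol l}(s)\cdot{\rm id}_{F_s}$ for a complex number $\Phi_{k\ol l}(s)$ which depends differentiably on $s$ since $\rho_{k\ol l}=R^h_{k\ol l}$ does. Hence only the scalar $\Phi_{k\ol l}$ remains to be computed. Since $H$ is the $L_2$-orthogonal projection onto harmonic sections and ${\rm id}_{F_s}$ is itself harmonic, pairing both sides with ${\rm id}_{F_s}$ in the $L_2$ inner product on ${\rm End}(F_s)$-valued forms yields
\[
  \Phi_{k\ol l}(s)\,\langle {\rm id}_{F_s},{\rm id}_{F_s}\rangle
  = \langle H(\rho_{k\ol l})(s),{\rm id}_{F_s}\rangle
  = \langle \rho_{k\ol l}|_s,{\rm id}_{F_s}\rangle
  = \langle R^h_{k\ol l}|_s,{\rm id}_{F_s}\rangle .
\]
For the metric induced by $h_s$ on ${\rm End}(F_s)$ one has the pointwise identities $({\rm id}_{F_s},{\rm id}_{F_s})={\rm tr}({\rm id}_{F_s})=r$ and $(R^h_{k\ol l}|_s,{\rm id}_{F_s})={\rm tr}(R^h_{k\ol l})$; inserting these, integrating against $\omega^n/n!$, and dividing, the factor $n!$ cancels between numerator and denominator and one obtains precisely \eqref{eq:Hkl}.

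The computation itself is routine bookkeeping; the one point that genuinely needs care is the compatibility of conventions — that $\rho_{k\ol l}$ restricts to $+R^h_{k\ol l}$ (and not its negative), and that the induced Hermitian metric on ${\rm End}(F_s)$ is normalised so that $({\rm id}_{F_s},{\rm id}_{F_s})=r$. Both are already fixed by the conventions in force in Section~\ref{sec:curvature}, so no further obstacle arises.
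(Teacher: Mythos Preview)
Your proof is correct and follows exactly the approach the paper intends: the paragraph preceding the lemma already observes that simplicity of $F_s$ forces $H(\rho_{k\ol l})(s)$ to be a scalar multiple of ${\rm id}_{F_s}$, and the formula \eqref{eq:Hkl} is then obtained, as you do, by pairing with ${\rm id}_{F_s}$ and using $\rho_{k\ol l}|_s = R^h_{k\ol l}|_s$. The paper leaves this short computation implicit (``This argument yields the following lemma''), and your write-up spells it out accurately, including the sign and normalisation checks.
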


Since Hermite-Einstein metrics are only unique up to a constant positive factor, the given metrics $h$ on $F$ over $X\times S$ may be rescaled, i.e.\ modified by a factor ${\rm exp}(\varphi(s))$, where $\varphi$ denotes a differentiable function:

\begin{proposition}
Let $(F,h) \to X \times S$ be a holomorphic, hermitian vector bundle such that the restrictions $(F_s,h_s)$ are simple Hermite-Einstein vector bundles over a compact Kähler manifold $(X,g)$ such that all ${\rm det}(F_s)$ are isomorphic to a fixed line bundle $L$ on $X$.

Then locally with respect to $S$ the metric $h$ can be rescaled such that all harmonic projections $H(\rho_{k \ol l})(s)$ vanish.
\end{proposition}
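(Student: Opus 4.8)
The plan is to produce the rescaling explicitly from Lemma~\ref{le:Hkl}. Since the whole of Section~\ref{sec:curvature} is local in $S$, I would first replace $S$ by a small polydisc $V$ around an arbitrary point. Lemma~\ref{le:Hkl} identifies $H(\rho_{k\ol l})(s)$ with $\Phi_{k\ol l}(s)\cdot\mathrm{id}_{F_s}$, where $\Phi_{k\ol l}(s)$ is the fibrewise average (against $\omega^n$) of the base-direction component $\mathrm{tr}(R^h_{k\ol l})$ of the curvature of $(\det F,\det h)$. So the problem reduces to realising the Hermitian matrix $(\Phi_{k\ol l})$ as a complex Hessian on $V$: if $\Phi_{k\ol l}=-\partial_k\partial_{\ol l}\varphi$ for some smooth real $\varphi$ on $V$, then replacing $h$ by $h':=e^{-\varphi(s)}h$ adds $r\,\partial_k\partial_{\ol l}\varphi$ to $\mathrm{tr}(R^h_{k\ol l})$ (the correction to the curvature of $\det h$ is pulled back from $S$ and is constant along the fibres, so it survives the fibre average unchanged), whence $H(\rho'_{k\ol l})(s)=\big(\Phi_{k\ol l}(s)+\partial_k\partial_{\ol l}\varphi(s)\big)\cdot\mathrm{id}_{F_s}=0$. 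Along the way one notes that multiplying $h_s$ by the positive constant $e^{-\varphi(s)}$ does not alter the Chern connection or curvature of $F_s$, so each $h'_s$ is again Hermite--Einstein with the same factor and $F_s$ is still simple, and that the harmonic Kodaira--Spencer representatives $\rho_k|_s$ are unchanged because the extra term in $\Omega^{h'}$ vanishes on restriction to a fibre; hence Lemma~\ref{le:Hkl} applies to $h'$ as well.

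It remains to produce $\varphi$, and this is where the hypothesis on the determinant enters. Since $\det F_s\cong L$ for every $s$, the line bundle $\det F\otimes q^*L^{-1}$ on $X\times S$ (with $q\colon X\times S\to X$ the other projection) has trivial restriction to every fibre $X\times\{s\}$, so $h^0\big(X,(\det F\otimes q^*L^{-1})|_{X\times\{s\}}\big)\equiv 1$; by cohomology and base change (cf.\ \cite{BS76}) the sheaf $p_*(\det F\otimes q^*L^{-1})$ is invertible and its pull-back maps isomorphically onto $\det F\otimes q^*L^{-1}$ (the seesaw principle), so $\det F\cong q^*L\otimes p^*M$ for a line bundle $M$ on $S$, and after shrinking $V$ we obtain $\det F|_{X\times V}\cong q^*L$. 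Now fix a Hermitian metric $h_L$ on $L$ and write $\det h=e^{\psi}\,q^*h_L$ with $\psi\in C^\infty(X\times V,\mathbb{R})$. Because $q^*h_L$ carries no $ds$- or $d\ol s$-factors, the base-direction component of the curvature of $(\det F,\det h)$ is simply $\mathrm{tr}(R^h_{k\ol l})=-\partial_k\partial_{\ol l}\psi$; since $\omega$ is independent of $s$ the base derivatives commute with the fibre integral, so
\[
  \varphi(s):=\frac{1}{r\int_X\omega^n}\int_{X\times\{s\}}\psi\,\omega^n
\]
satisfies $\Phi_{k\ol l}(s)=-\partial_k\partial_{\ol l}\varphi(s)$, which is exactly what was needed. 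Replacing $h$ by $e^{-\varphi(s)}h$ then finishes the proof.

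The only genuinely non-formal ingredient is the local triviality $\det F|_{X\times V}\cong q^*L$ (the seesaw step); the rest is bookkeeping with bidegrees and signs. I would also record a coordinate-free variant that bypasses the seesaw: the real $(1,1)$-form $\Psi:=\sqrt{-1}\sum_{k,l}\Phi_{k\ol l}\,ds^k\wedge ds^{\ol l}$ on $V$ equals, up to the positive constant $r\int_X\omega^n$, the fibre integral $p_*\big(\gamma\wedge q^*\omega^n\big)$, where $\gamma$ is the first Chern form of $(\det F,\det h)$; since $\gamma$ is closed and $p_*$ (integration over the compact fibre $X$) commutes with $d$, the form $\Psi$ is closed, and it is real because $(\Phi_{k\ol l})$ is Hermitian, so the local $\partial\ol\partial$-lemma provides a real $\varphi$ with $\Psi=\sqrt{-1}\,\partial\ol\partial\varphi$, i.e.\ $(\Phi_{k\ol l})$ is a complex Hessian, and one concludes as above.
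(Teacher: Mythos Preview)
Your proof is correct and follows essentially the same route as the paper's: trivialise $\det F\otimes q^*L^{-1}$ locally, write $\det h=e^{\chi}q^*h_L$, observe $\mathrm{tr}(R^h_{k\ol l})=-\partial_k\partial_{\ol l}\chi$, set $\varphi$ equal to the fibre average of $\chi/r$, and rescale by $e^{-\varphi}$. The paper simply asserts the local triviality step, whereas you supply the seesaw justification and also an alternative coordinate-free argument via the local $\partial\ol\partial$-lemma; both additions are correct but go beyond what the paper records.
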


\begin{proof}
Let $q:X\times S \to X$ be the canonical projection. We may assume that $\det F \otimes q^*L^{-1}$ is trivial. We equip $L$ with an auxiliary hermitian metric $h_L$ and consider $\det h \cdot q^*(h_L^{-1})$, which is of the form ${\rm exp}(\chi)$. Now
$$
{\rm tr}(R^h_{k\ol l})= -\frac{\pt^2 \log \det h}{\pt s^k\pt s^{\ol l}} = -\frac{\pt^2 \chi}{\pt s^k\pt s^{\ol l}},
$$
since the extra additive term involving $\log h_L$ does not depend upon $s$. Now the components $\Phi_{k\ol l}(s)$ in the sense of \eqref{eq:Hkl} are of the form
$$
\Phi_{k\ol l}(s)= -\frac{\pt^2 \varphi(s)}{\pt s^k \pt s^\ol l},
$$
where
$$
\varphi(s)= \frac{1}{r}\int_{X\times \{s\}}\chi\, \omega^n \Big/ \int_X \omega^n.
$$
We replace $h$ by ${\rm exp }(-\varphi)h$, which yields the claim.
\end{proof}
If $S$ is an arbitrary base space, the bundle $\det F \otimes q^*L^{-1}$ is of the form $p^*M$, where $M$ is a holomorphic line bundle on $S$. With the same methods one can see immediately the following somewhat more general fact.
\begin{proposition}
For any hermitian metric $h_M$ on $M$, there is a hermitian metric $h$ on $F$ that restricts to a family of Hermite-Einstein metrics $h_s$ such that
$$
\sqrt{-1}H(\rho_{k \ol l})ds^k \we ds^\ol l = - \sqrt{-1} \pt \ol\pt \log h_M
$$
on $S$.
\end{proposition}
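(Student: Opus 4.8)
Here is how I would prove the proposition.

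The plan is to imitate the proof of the preceding proposition, but now prescribing the value of the harmonic projections $H(\rho_{k\ol l})$ instead of forcing them to vanish. First I would fix an auxiliary hermitian metric $h_L$ on $L$ and form the hermitian metric $\det h\cdot q^*(h_L^{-1})$ on the line bundle $\det F\otimes q^*L^{-1}=p^*M$. In a local trivialization of $M$ over $U\subset S$, pulled back to $X\times U$, this metric is a positive smooth function, which I would write as $\exp(\chi)$ with $\chi=\chi(x,s)$; under the transition functions of $M$ it transforms exactly like $\log$ of a hermitian metric on $M$. Next, using Lemma~\ref{le:Hkl} together with the identity $\mathrm{tr}(R^h_{k\ol l})=-\pt_k\pt_{\ol l}\log\det h=-\pt_k\pt_{\ol l}\chi$ (the $h_L$–term being $s$–independent, exactly as in the previous proof), one obtains $H(\rho_{k\ol l})(s)=\Phi_{k\ol l}(s)\cdot\mathrm{id}_{F_s}$ with $\Phi_{k\ol l}=-\pt_k\pt_{\ol l}\Psi$, where $\Psi(s):=\frac1r\int_{X\times\{s\}}\chi\,\omega^n\big/\int_X\omega^n$ is a smooth function of $s$ that again transforms like $\log$ of a metric on $M$.

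The next step is to compare $\Psi$ with $h_M$: writing $h_M=\exp(\chi_M(s))$ in the same trivializations, the difference $\varphi:=\Psi-\chi_M$ is a \emph{globally well-defined} smooth function on $S$, depending on $s$ only, since both summands transform in the same way under the transition functions of $M$. I would then replace $h$ by $\exp(-\varphi)h$. Because $\varphi$ depends only on $s$, every restriction $h_s$ is merely rescaled by a positive constant, so it is still Hermite-Einstein; and since $\det(\exp(-\varphi)h)=\exp(-r\varphi)\det h$, the function $\chi$ is replaced by $\chi-r\varphi$ and hence $\Psi$ by $\Psi-\varphi=\chi_M$. For the new metric one therefore gets $\Phi_{k\ol l}=-\pt_k\pt_{\ol l}\chi_M$, i.e.
\[
  \sqrt{-1}\,H(\rho_{k\ol l})\,ds^k\we ds^{\ol l}
  =\sqrt{-1}\,\Phi_{k\ol l}\,ds^k\we ds^{\ol l}
  =-\sqrt{-1}\,\pt\ol\pt\,\chi_M
  =-\sqrt{-1}\,\pt\ol\pt\log h_M
\]
on $S$, which is the assertion.

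The local curvature identities needed here are precisely those already established for the previous proposition, so the only point that requires genuine care — and the reason one can say the result follows ``with the same methods'' — is the transformation behaviour under the cocycle of $M$: one must check that $\chi$, and hence its fibre average $\Psi$, really is the local potential of a hermitian metric on the pullback $p^*M$, so that subtracting $\log h_M$ produces an honest global function $\varphi$ on $S$ that depends on $s$ alone. Once that bookkeeping is in place, the Hermite-Einstein property is preserved by the rescaling and the displayed formula drops out of Lemma~\ref{le:Hkl} as above.
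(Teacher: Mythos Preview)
Your approach is precisely what the paper indicates: it does not give a separate argument for this proposition but simply says ``with the same methods'' as the preceding one, and you have faithfully carried those methods over, introducing the auxiliary metric $h_L$, forming $\chi$, averaging to get $\Psi$, and rescaling by $e^{-\varphi}$. So as a strategy this matches the paper exactly.

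There is, however, one bookkeeping slip in the globalisation step. You assert that $\Psi(s)=\frac1r\int_{X\times\{s\}}\chi\,\omega^n/\int_X\omega^n$ ``transforms like $\log$ of a metric on $M$''. In fact $\chi$ transforms by $-2\log|g_{UV}|$ under the transition functions $g_{UV}$ of $M$, so after averaging and dividing by $r$ one gets that $\Psi$ transforms by $-\tfrac{2}{r}\log|g_{UV}|$; it is $r\Psi$, not $\Psi$, that behaves like the logarithm of a hermitian metric on $M$. Consequently your $\varphi=\Psi-\chi_M$ is \emph{not} a globally well-defined function on $S$ when $r>1$. The quantity that \emph{is} global is $\Psi-\tfrac1r\chi_M$ (equivalently $r\Psi-\chi_M$), and rescaling by that produces $\Phi_{k\ol l}=-\tfrac1r\,\pt_k\pt_{\ol l}\log h_M$, i.e.\ the displayed identity with an extra factor $1/r$ on the right. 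If the statement is read locally on $S$ (as in the previous proposition), or up to this harmless normalisation, your argument goes through verbatim; but the sentence claiming that $\Psi-\chi_M$ is global needs to be corrected.
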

In a moduli theoretic situation of a family with $\det(F_s)\simeq L$ for all $s$, after replacing $S$ by a finite unbranched covering $\pi:\wt S \to S$, there exists a line bundle $\wt M$ on $\wt S$ such that $\wt M^{\otimes r} = \pi^* M$. Now the bundle $F$ can be replaced by $\wt F= \pi^* F \otimes \wt q^*\wt M^{-1}$ where $\wt q:X\times \wt S \to \wt S$ is the canonical projection.  The isomorphism classes of the fibers are unchanged.
\begin{corollary}
In the above situation, the bundle $\wt F$ possesses a family of Hermite-Einstein metrics such that everywhere all $H(\rho_{k \ol l})(s)$ vanish.
\end{corollary}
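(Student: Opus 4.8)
The plan is to reduce the assertion to the Proposition on families with fixed determinant proved above. The point is that the twist $\wt F=\pi^*F\otimes\wt q^*\wt M^{-1}$ is rigged precisely so that $\det\wt F$ becomes the pull-back of $L$ from $X$; thus the line bundle $M$ on the base, which is exactly what prevents $F\to X\times S$ from being in the \emph{globally} fixed-determinant situation, disappears, and the rescaling argument used to prove that Proposition applies on all of $\wt S$ rather than only locally.

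First I would carry out the determinant bookkeeping. Write $\Pi={\rm id}_X\times\pi: X\times\wt S\to X\times S$, and let $\wt q_X: X\times\wt S\to X$ be the projection onto $X$, so that $\wt q_X=q\circ\Pi$ and $p\circ\Pi=\pi\circ\wt q$. Using $\det(E\otimes N)=\det E\otimes N^{\otimes{\rm rk}\,E}$, the relation $\wt M^{\otimes r}=\pi^*M$, and the defining property $\det F\otimes q^*L^{-1}=p^*M$, one obtains
\[
  \det\wt F
  =\Pi^*\det F\otimes\wt q^*\wt M^{-r}
  =\wt q_X^*L\otimes\wt q^*\pi^*M\otimes\wt q^*(\pi^*M)^{-1}
  =\wt q_X^*L.
\]
Hence $\det\wt F\otimes\wt q_X^*L^{-1}$ is holomorphically trivial on $X\times\wt S$ --- not merely locally over the base, as is the case for $\det F\otimes q^*L^{-1}$.

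Next I would build a family of Hermite-Einstein metrics on $\wt F$ and rescale it. Starting from the given family metric $h$ on $F$, the metric $\Pi^*h$ on $\pi^*F$ still restricts to Hermite-Einstein metrics along the fibers, since $\pi$ is unbranched and $\Pi$ induces an isomorphism $X\times\{s\}\xrightarrow{\sim}X\times\{\pi(s)\}$ of bundles with metric. Picking any hermitian metric $h_{\wt M}$ on $\wt M$, set $\wt h=\Pi^*h\otimes\wt q^*h_{\wt M}^{-1}$; since tensoring with a metric pulled back from $\wt S$ only multiplies each $\wt h_s$ by a positive constant, $\wt h$ is again a family of Hermite-Einstein metrics, and all standing hypotheses (smoothness of the base, local freeness of the higher direct images, etc.) are preserved under the flat base change $\pi$ and under the twist, so that Lemma~\ref{le:Hkl} applies to $\wt F$. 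Because $\det\wt F\otimes\wt q_X^*L^{-1}$ is globally trivial, for any auxiliary metric $h_L$ on $L$ the metric $\det\wt h\cdot\wt q_X^*h_L^{-1}$ on it equals ${\rm exp}(\wt\chi)$ for a globally defined function $\wt\chi$ on $X\times\wt S$; exactly as in the proof of the fixed-determinant Proposition, ${\rm tr}(R^{\wt h}_{k\ol l})=-\pt_k\pt_{\ol l}\wt\chi$, so by Lemma~\ref{le:Hkl} the function $\Phi_{k\ol l}$ describing $H(\rho_{k\ol l})$ equals $-\pt_k\pt_{\ol l}\wt\varphi$ with
\[
  \wt\varphi(s)=\frac1r\int_{X\times\{s\}}\wt\chi\,\omega^n\Big/\int_X\omega^n,
\]
now a globally defined function on $\wt S$. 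Replacing $\wt h$ by ${\rm exp}(-\wt\varphi)\,\wt h$ --- still a family of Hermite-Einstein metrics --- turns $\wt\chi$ into $\wt\chi-r\wt\varphi$ and hence $\wt\varphi$ into $0$, so that $H(\rho_{k\ol l})(s)=\Phi_{k\ol l}(s)\cdot{\rm id}_{\wt F_s}$ vanishes for every $s\in\wt S$.

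The step I expect to be the real point is conceptual rather than technical: recognizing the identity $\det\wt F\cong\wt q_X^*L$ of the second paragraph, i.e.\ that twisting by $\wt q^*\wt M^{-1}$ absorbs the obstruction $M$ and places $\wt F$ in the globally fixed-determinant setting. Once that is in hand, the rest is a verbatim repetition of the proof of the fixed-determinant Proposition; the only care needed is to keep the several projections ($\Pi$, $\wt q$, $\wt q_X$, $p$, $q$) apart in the determinant computation and to check that passing to the unbranched cover and twisting by a pull-back from the base leave intact the hypotheses under which Lemma~\ref{le:Hkl} was established.
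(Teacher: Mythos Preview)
Your proof is correct and is exactly the argument the paper leaves implicit: the construction of $\wt F$ is set up precisely so that $\det\wt F\cong\wt q_X^*L$, whence $\det\wt F\otimes\wt q_X^*L^{-1}$ is \emph{globally} trivial on $X\times\wt S$ and the rescaling argument of the first Proposition applies over all of $\wt S$ rather than only locally. The paper does not spell this out, but your determinant computation and the subsequent repetition of the fixed-determinant argument are the intended route.
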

Let a family $(F,h)$ of (simple) holomorphic Hermite-Einstein bundles be given. We have the induced family of Hermite-Einstein metrics $\widetilde h_s$ on ${\rm End}(F_s)$, and the connection and curvature forms on ${\rm End}(F_s)$ are given by $[\theta^h_s, \,\,]$ and $[\Omega^h_s, \,\,]$ resp.\ so that ${\rm tr}(R^{\widetilde h}_{k \ol l})=0$. This fact implies that the induced term for the curvature formula for the direct images vanishes. Hence the following proposition holds.

\begin{proposition}
Given a holomorphic family of simple, holomorphic Hermite-Einstein bundles, equip the endomorphism bundles with the induced structure.  Then the fiberwise harmonic projections $H(\rho_{k \ol l})$ for the sheaves $R^qp_*({\rm End}(F))$ vanish identically.
\end{proposition}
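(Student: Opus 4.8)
The plan is to deduce the statement from Lemma~\ref{le:Hkl} applied to the original family $(F,h)$ itself, by observing that every object attached to the induced family on $\mathrm{End}(F)$ is the image, under the adjoint action, of the corresponding object for $(F,h)$. Concretely, the Chern connection of $\widetilde h$ on $\mathrm{End}(F)=F^*\otimes F$ sends a section $\zeta$ to $d\zeta+[\theta^h,\zeta]$, so its curvature acts by $\zeta\mapsto[\Omega^h,\zeta]$; since contraction with a vector field is linear, the quantity $\rho_{k\ol l}$ attached to the endomorphism family is
\[
  \rho_{k\ol l}^{\mathrm{End}}
  =\pt_{\ol l}\,\lrcorner\,(\pt_k\,\lrcorner\,\Omega^{\widetilde h})
  =\big[\,\pt_{\ol l}\,\lrcorner\,(\pt_k\,\lrcorner\,\Omega^h)\,,\ \cdot\ \big]
  =\mathrm{ad}(\rho_{k\ol l}),
\]
a section of $\mathrm{End}(\mathrm{End}(F))$ over $X\times S$; in the same way $\sqrt{-1}\Lambda_g\Omega^{\widetilde h_s}=\mathrm{ad}(\lambda(s)\,\mathrm{id}_{F_s})=0$, so the $\widetilde h_s$ are Hermite--Einstein with vanishing Einstein constant. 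Taking fibrewise traces gives $\mathrm{tr}(\rho_{k\ol l}^{\mathrm{End}})=\mathrm{tr}(\mathrm{ad}(\rho_{k\ol l}))=0$, which already kills the trace part of $H(\rho_{k\ol l}^{\mathrm{End}})$; but since $\mathrm{End}(F_s)$ is only polystable this is not yet the full claim, and the following observation is needed.

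The key point is that $\mathrm{ad}\colon\mathrm{End}(F)\to\mathrm{End}(\mathrm{End}(F))$ is a \emph{parallel} bundle homomorphism for the Chern connections, because composition — equivalently the bracket — on an endomorphism bundle is parallel for the induced connection. Hence its pointwise adjoint $\mathrm{ad}^*$ is parallel as well, so both $\mathrm{ad}$ and $\mathrm{ad}^*$ are holomorphic and therefore carry holomorphic — equivalently harmonic — sections to holomorphic sections on each fibre $X\times\{s\}$. Writing a section $\zeta$ of $\mathrm{End}(F_s)$ as $\zeta=H(\zeta)+\zeta'$ with $\zeta'$ orthogonal to the harmonic sections, and testing $\mathrm{ad}(\zeta')$ against any harmonic $w$ via $\langle\mathrm{ad}(\zeta'),w\rangle=\langle\zeta',\mathrm{ad}^*(w)\rangle=0$, we obtain $H(\mathrm{ad}(\zeta))=\mathrm{ad}(H(\zeta))$. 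Applying this to $\zeta=\rho_{k\ol l}|_s$ and using Lemma~\ref{le:Hkl} for $(F,h)$, we conclude
\[
  H(\rho_{k\ol l}^{\mathrm{End}})(s)
  =\mathrm{ad}\big(H(\rho_{k\ol l})(s)\big)
  =\mathrm{ad}\big(\Phi_{k\ol l}(s)\,\mathrm{id}_{F_s}\big)
  =\Phi_{k\ol l}(s)\,[\,\mathrm{id}_{F_s},\ \cdot\ ]=0
\]
for every $s$, which is the assertion.

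The step that needs genuine care is the parallelism of $\mathrm{ad}$, together with the fact that the pointwise adjoint of a parallel bundle map between Hermitian bundles with their Chern connections is again parallel, hence holomorphic: a holomorphic bundle map on its own need not have a holomorphic adjoint, so it is really the parallel, algebraic nature of $\mathrm{ad}$ that legitimizes the interchange $H\circ\mathrm{ad}=\mathrm{ad}\circ H$. Everything else is bookkeeping, and one could equally run the argument without invoking Lemma~\ref{le:Hkl} by noting directly that $\mathrm{ad}^*$ carries harmonic sections of $\mathrm{End}(\mathrm{End}(F_s))$ to holomorphic, pointwise traceless sections of $\mathrm{End}(F_s)$, which vanish because $F_s$ is simple.
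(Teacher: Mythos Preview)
Your proof is correct, and in fact more careful than the paper's own justification. Both you and the paper start from the same observation: the induced connection on $\mathrm{End}(F)$ has curvature $[\Omega^h,\cdot]$, so that $\rho_{k\ol l}^{\mathrm{End}}=\mathrm{ad}(\rho_{k\ol l})$ and in particular $\mathrm{tr}(R^{\widetilde h}_{k\ol l})=0$. The paper stops there, citing the trace vanishing and the formula of Lemma~\ref{le:Hkl} to conclude. You correctly flag that Lemma~\ref{le:Hkl} as stated presupposes simplicity of the fibres, while $\mathrm{End}(F_s)$ is only polystable, so that the harmonic projection onto $H^0(X,\mathrm{End}(\mathrm{End}(F_s)))$ need not a priori land in $\mathbb C\cdot\mathrm{id}_{\mathrm{End}(F_s)}$. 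Your remedy --- showing that $\mathrm{ad}$ is parallel, hence $\mathrm{ad}^*$ is parallel and holomorphic, whence $H\circ\mathrm{ad}=\mathrm{ad}\circ H$ on sections --- is a genuine additional step that closes this gap; the variant you sketch at the end (that $\mathrm{ad}^*$ carries holomorphic sections of $\mathrm{End}(\mathrm{End}(F_s))$ to holomorphic, pointwise traceless sections of $\mathrm{End}(F_s)$, which vanish by simplicity of $F_s$) is even more direct and avoids Lemma~\ref{le:Hkl} altogether. What your approach buys is an honest proof of $H(\rho_{k\ol l}^{\mathrm{End}})=0$ as an endomorphism, not merely the vanishing of its trace component.
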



\end{document}